\documentclass[10pt]{amsart}
\usepackage{amsmath}
\usepackage[usenames,dvipsnames]{color}
\usepackage{parskip}
\usepackage{amsfonts}
\usepackage{amscd}
\usepackage[centertags]{amsmath}
\usepackage{amssymb}
\usepackage{stmaryrd}
\usepackage[all,cmtip]{xy}
\usepackage[english]{babel}
\usepackage{tabularx}
\usepackage{amsxtra}
\usepackage{euscript}
\usepackage[T1]{fontenc}
\usepackage{doc, exscale, fontenc, latexsym, syntonly}
\usepackage{amsfonts}
\usepackage{amsthm}
\usepackage{graphicx}
\usepackage{mciteplus}
\usepackage{cite}

\newtheorem{theorem}{Theorem}[section]
\newtheorem{corollary}[theorem]{Corollary}

\newtheorem{proposition}[theorem]{Proposition}

\theoremstyle{definition}
\newtheorem{definition}[theorem]{Definition}
\newtheorem{example}[theorem]{Example}
\theoremstyle{remark}

\numberwithin{figure}{section}
\numberwithin{table}{section}
%%%%%%%%%%%%%%%%%%%%%%%%%%%%%%%%%%%%%%%%%%%%%%%%%

\newcommand*\acknowledgment[1]{%
	\begingroup\noindent
	\rightskip\leftskip
	\begin{flushleft}\textbf{\large Acknowledgment.}\, #1%
		\par\vspace*{1mm}\end{flushleft}\endgroup}
\begin{document}

\title[Certain Topological Methods For Computing Digital Topological Complexity]{Certain Topological Methods For Computing Digital Topological Complexity}

\author{MEL\.{I}H \.{I}S and \.{I}SMET KARACA}
\date{\today}

\address{\textsc{Melih Is}
Ege University\\
Faculty of Sciences\\
Department of Mathematics\\
Izmir, Turkey}
\email{melih.is@ege.edu.tr}
\address{\textsc{Ismet Karaca}
Ege University\\
Faculty of Science\\
Department of Mathematics\\
Izmir, Turkey}
\email{ismet.karaca@ege.edu.tr}

\subjclass[2010]{22A05, 46M20, 68U10, 68T40, 62H35}

\keywords{Digital topology, Topological robotics, Higher topological complexity, Lusternik-schnirelmann category, Topological groups}

\begin{abstract}
In this paper, we examine the relations of two closely related concepts, the digital Lusternik-Schnirelmann category and the digital higher topological complexity, with each other in digital images. For some certain digital images, we introduce $\kappa-$topological groups in the digital topological manner for having stronger ideas about the digital higher topological complexity. Our aim is to improve the understanding of the digital higher topological complexity. We present examples and counterexamples for $\kappa-$topological groups. 
\end{abstract}

\maketitle

%%%%%%%%%%%%%%%%%%%%%%%%%%%%%%%%%%%%%%%%%%%%%%%%%%%%%
\section{Introduction}
\label{intro}
\quad The interaction betweens two popular topics (digital image processing and robotics) can often be very valuable in science. The subject of robotics is rapidly increasing its popularity among who study on topology. In the digital topology, one of the extraordinary fields of mathematics, with using topological properties, we can melt these two topics in one pot. Thus, we are trying to build a theoretical bridge between motion planning algorithms of a robot and digital image analysis. In future studies, we think that the theoretical knowledge will focus on the applications of industry, perhaps in other fields. In more details, an autonomous robot is expected to be able to determine its own direction and route without any help. There are many types of robots using motion planning algorithms for this duty, especially industrial and mobile robots. Industrial robots undertake some tasks in various fields such as assembly and welding works in the industry. As an example of mobile robots, we can consider unmanned aerial vehicles and the room cleaning robots.

\quad Digital topology \cite{Ros:1979} has been developing and increasing its scientific importance. Many significant invariants of topology, especially homotopy, homology and cohomology, have a substantial value for digital images. The digital homotopy is, in particular, our fundamental equipment. You can easily have the comprehensive knowledge about the digital homotopy from \cite{Boxer:1999,Boxer:2005,Boxer:2006,Boxer2:2006,BoxKar:2012,BoxStae:2016,Kong:1989}. As we mentioned, we are concerned with topological interpretations of robot motions in digital images. Farber \cite{Farber:2003} studies topological complexity of motion planning. The construction of motion planning algorithms on a lot of topological spaces has been discussed. Different types of topological structures have been expertly placed in the theory of this subject \cite{Farber:2008}. What we add to these studies is related to the part of the notion of higher topological complexity $TC_{n}$ in digital images. Rudyak \cite{Rudyak:2010} first define it for ordinary topological spaces. It is integrated into digital topology by \.{I}s and Karaca \cite{KaracaIs:2018}. The digital meaning of the Lusternik-Schnirelmann category $cat$ appears in \cite{BorVer:2018} and the importance of the notion is its close relationship with $TC_{2}$, in other saying the special case of $TC_{n}$, when $n=2$. Therefore, we frequently use $cat$ having precious results about not only $TC_{2}$ but also $TC_{n}$. The definition of $cat$, $TC_{2}$ and $TC_{n}$ is expressed by the concept of the digital Schwarz genus of some digital fibrations. In a way, we can figure out that these concepts are in a close relation with all the properties of the notion digital Schwarz genus and each other. Moreover, the structure that helps us stating one of the strongest relationship between $cat$ and $TC$ is the $\kappa-$topological group of digital images, where $\kappa$ is an adjacency relation of a digital image. We introduce this notion and outline the framework of it.

\quad The structure of the paper is as follows. First of all, we start by recalling the cornerstones of digital topology that we often use in this study and some previously emphasized properties for the digital higher topological complexity. After Section Preliminaries, we give some results about the digital Schwarz genus of a digital image and the digital Lusternik-Schnirelmann category of a digital image. These results are general, i.e., not spesific for the digital higher topological complexity. However, our aim is to obtain a lower or an upper bound for specifying TC$_{n}$ in digital spaces. We deal with topological groups in digital setting in Section 4. After we give the definition of a $\kappa-$topological group of a digital image, we present interesting examples for some digital images. Before the last section, we obtain various results using $cat$ and $\kappa-$topological groups. Moreover, we give examples and counterexamples about certain digital images.

\section{Preliminaries}
\label{sec:1}
\quad For any positive integer $r$, a \textit{digital image} $(Y,\lambda)$ consists of a finite subset $Y$ of $\mathbb{Z}^{r}$ and an adjacency relation $\lambda$ for the elements of $Y$ such that the relation is defined as follows: Two distinct points $y$ and $z$ in $\mathbb{Z}^{r}$ are \textit{$c_{k}-$adjacent} \cite{Boxer:1999} for a positive integer $k$ with $1 \leq k \leq r$, if there are at most $k$ indices $i$ such that $|y_{i} - z_{i}| = 1$ and for all other indices $i$ such that $|y_{i} - z_{i}| \neq 1$, $y_{i} = z_{i}$. In the one-dimensional case, if we study in $\mathbb{Z}$, then we merely have the $2-$adjacency. There are completely two adjacency relations $4$ and $8$ in $\mathbb{Z}^{2}$ and completely three adjacency relations $6$, $18$ and $24$ in $\mathbb{Z}^{3}$.

\quad Let $Y \subset \mathbb{Z}^{r}$ be a digital image. Then $Y$ is \textit{$\lambda-$connected} \cite{Herman:1993} if and only if for any $y,z \in Y$ with $y \neq z$, there is a set $\{y_{0}, y_{1}, ..., y_{m}\}$ of points of $Y$ such that $y = y_{0}$, $z = y_{m}$ and $y_{i}$ and $y_{i+1}$ are $\lambda-$adjacent, where $i = 0, 1, ..., m-1$. Let $(Y_{1},\lambda_{1})$ and $(Y_{2},\lambda_{2})$ be two digital images in $\mathbb{Z}^{r_{1}}$ and $\mathbb{Z}^{r_{2}}$, respectively. Let $f : Y_{1} \longrightarrow Y_{2}$ be a map. Then $f$ is \textit{$(\lambda_{1},\lambda_{2})-$continuous} \cite{Boxer:1999} if, for any $\lambda_{1}-$connected subset $A_{1}$ of $Y_{1}$, $f(A_{1})$ is also $\lambda_{2}-$connected. [Proposition 2.5,\cite{Boxer:1999}] proves that the composition of any two digitally continuous maps is again digitally continuous.

\quad A digital map $f : (Y_{1},\lambda_{1}) \longrightarrow (Y_{2},\lambda_{2})$ is called a \textit{$(\lambda_{1},\lambda_{2})-$isomorphism} \cite{Boxer2:2006} if $f$ is bijective, $(\lambda_{1},\lambda_{2})-$continuous and also $f^{-1}$ is $(\lambda_{2},\lambda_{1})-$continuous. Let $[0,r]_{\mathbb{Z}}$ be a digital image with a positive integer $r$. It is clearly has $2-$\linebreak adjacency. For any digital image $(Y,\lambda)$, if $f : [0,r]_{\mathbb{Z}} \longrightarrow Y$ is a $(2,\lambda)-$continuous map with $f(0) = y_{1}$ and $f(r) = y_{2}$, then $f$ is called a \textit{digital path} \cite{Boxer:2006} between the initial point $y_{1}$ and the final point $y_{2}$. Two digital paths $f_{1}$ and $f_{2}$ in $(Y,\lambda)$ are \textit{adjacent paths} \cite{KaracaIs:2018} if, for all times $t$, they are digitally connected.

\quad Given two digital images $(Y_{1},\lambda_{1})$ and $(Y_{2},\lambda_{2})$ in $\mathbb{Z}^{r_{1}}$ and $\mathbb{Z}^{r_{2}}$, respectively such that $f_{1},f_{2} : Y_{1} \longrightarrow Y_{2}$ are two $(\lambda_{1},\lambda_{2})-$continuous maps. The maps $f_{1}$ and $f_{2}$ are \textit{ $(\lambda_{1},\lambda_{2})-$homotopic} \cite{Boxer:1999} in $Y$ (denoted by $f_{1} \simeq_{(\lambda_{1},\lambda_{2})}f_{2}$), if, for a positive integer $m$, there is a digital map $F : Y_{1} \times [0,m]_{\mathbb{Z}} \longrightarrow Y_{2}$ which admits the following conditions:
\begin{itemize}
	\item for all $y \in Y_{1}$, $F(y,0)=f_{1}(y)$ and $F(y,m) = f_{2}(y)$;
	
	\item for all $y \in Y_{1}$ and for all $s \in [0,m]_{\mathbb{Z}}$, \[F_{y}:[0,m]_{\mathbb{Z}} \longrightarrow Y_{2}\] 
	\hspace*{4.7cm} $s \longmapsto F_{y}(s)= F(y,s)$
	
	is $(2,\lambda_{2})-$continuous;
	
	\item for all $s \in [0,m]_{\mathbb{Z}}$ and for all $y \in Y_{1}$,
	\[F_{s} : Y_{1} \longrightarrow Y_{2}\] \hspace*{4.7cm}$y \longmapsto F_{s}(y)=F(y,s)$ 
	
	is $(\lambda_{1},\lambda_{2})-$continuous.
\end{itemize}
The function $F$ in the definition above is said to be \textit{digital homotopy} between $f_{1}$ and $f_{2}$. Note that a homotopy relation, in the digital sense, is equivalence on digitally continuous maps. \cite{Boxer:1999}.

\quad Let $(Y_{1},\lambda_{1})$ and $(Y_{2},\lambda_{2})$ be any digital images for which $f : Y_{1} \rightarrow \nolinebreak Y_{2}$ is digitally continuous. Then $f$ is called \textit{$(\lambda_{1},\lambda_{2})-$nullhomotopic} \cite{Boxer:1999} in $Y_{2}$ on condition that $f$ is $(\lambda_{1},\lambda_{2})-$homotopic to a constant map in $Y_{2}$. Assume now that the digital map \linebreak $f : (Y_{1},\lambda_{1}) \longrightarrow (Y_{2},\lambda_{2})$ is $(\lambda_{1},\lambda_{2})-$continuous. If there exists a $(\lambda_{2},\lambda_{1})-$continuous map $g : (Y_{2},\lambda_{2}) \longrightarrow (Y_{1},\lambda_{1})$ for which $g \circ f \simeq_{(\lambda_{1},\lambda_{1})} id_{Y_{1}}$ and $f \circ g \simeq_{(\lambda_{2},\lambda_{2})} id_{Y_{2}}$, then $f$ is a \textit{$(\lambda_{1},\lambda_{2})-$homotopy equivalence} \cite{Boxer:2005}. It is said to be that a digital image $(Y,\lambda)$ is \textit{$\lambda-$contractible} \cite{Boxer:1999} if $id_{Y}$ is $(\lambda,\lambda)-$homotopic to a map $c$ of digital images for some $c_{0} \in Y$, where $c : Y \longrightarrow Y$ is defined with $c(y) = c_{0}$ for all $y \in Y$.

\quad The adjacency relation varies in several digital images. For instance, an adjacency relation on the set of digital functions is discussed in \cite{LupOpreaScov:2019}. For any images $X$ and $Y$, a \textit{function space map in digital images} is stated with the set of all maps $X \rightarrow Y$ with adjacency as follows: for any two maps $f$, $g : X \rightarrow Y$, they are called \textit{adjacent} in the set of digital function spaces if $f(x)$ and $g(x^{'})$ are adjacent points in $Y$ whenever $x$ and $x^{'}$ are adjacent points in $X$. Another crucial example is given on the cartesian product of digital images \cite{BoxKar:2012}: Let $(Y,\lambda_{1})$ and $(Z,\lambda_{2})$ be any two digital images such that the points $(y,z)$ and $(y{'},z{'})$ belong to $Y \times Z$. Then $(y,z)$ and $(y{'},z{'})$ are \textit{adjacent in the cartesian product digital image} $Y \times Z$ if one of the following conditions holds:
\begin{itemize}
	\item $y = y{'}$ and $z = z{'}$; or
	
	\item $y = y{'}$ and $z$ and $z{'}$ are $\lambda_{2}-$adjacent; or
	
	\item $y$ and $y{'}$ are $\lambda_{1}-$adjacent and $z = z{'}$; or	
	
	\item $y$ and $y{'}$ are $\lambda_{1}-$adjacent and $z$ and $z{'}$ are $\lambda_{2}{'}-$adjacent.
\end{itemize}
We define the \textit{minimal adjacency relation for the cartesian product of digital images} as the smallest number of all possible adjacency relations for the product image. Recall that the set $\{6,18,26\}$ of adjacency relations on $\mathbb{Z}^{3}$. We must choose $6-$adjacency as the minimal adjacency relation for $\mathbb{Z}^{3}$. 

\quad If a map $p : (X,\kappa_{1}) \longrightarrow (Y,\kappa_{2})$ has the digital homotopy lifting property for every digital image, then $p$ is called a \textit{digital fibration} \cite{EgeKaraca:2017}.

\begin{definition}\cite{MelihKaraca}
	Let $(X,\kappa_{1})$ and $(Y,\kappa_{2})$ be any digitally connected images. A \textit{digital fibrational substitute} of a map $f:(X,\kappa_{1}) \longrightarrow (Y,\kappa_{2})$ is, in the digital sense,  a fibration $\widehat{f}:(Z,\kappa_{3}) \longrightarrow (Y,\kappa_{2})$ for which $\widehat{f} \circ h = f$, i.e.,
	\begin{displaymath}
	\xymatrix{
		X \ar[r]^{h} \ar[dr]_f &
		Z \ar@{.>}[d]^{\widehat{f}} \\
		& Y, }
	\end{displaymath}
	where $h$ is an equivalence in the sense of digital homotopy.
\end{definition}

\begin{definition}\cite{MelihKaraca}
	The digital Schwarz genus of a digital fibration \linebreak$p:(E,\lambda_{1}) \longrightarrow (B,\lambda_{2})$, denoted by $genus_{\lambda_{1},\lambda_{2}}(p)$, is defined as a minimum number $k$ for which $\{V_{1}, V_{2}, ..., V_{k}\}$ is a cover of $B$ with the property that there is a continuous map of digital images $s_{i}:(V_{i},\lambda_{1}) \longrightarrow (E,\lambda_{2})$ such that $p \circ s_{i} = id_{V_{i}}$ for all $1 \leq i \leq k$.
\end{definition} 

\quad In the digital meaning, we note that the Schwarz genus of a map $p$ is the Schwarz genus of the digital fibrational substitute of $p$. Moreover, the fact that the Schwarz genus of a digital map is invariant from the chosen fibrational substitute is proved in [Lemma 3.4,\cite{MelihKaraca}].
\begin{definition}\cite{MelihKaraca}
	Let $X^{[0,m]_{\mathbb{Z}}}$ be a digital function space of all continuous functions from $[0,m]_{\mathbb{Z}}$ to a digitally connected image $(X,\kappa)$ for any positive integer $m$. Then \textit{topological complexity of digital images} \[TC(X,\kappa) = genus_{\lambda_{\ast},\kappa_{\ast}}(p),\] where $p : (X^{[0,m]_{\mathbb{Z}}},\lambda_{\ast}) \longrightarrow (X \times X,\kappa_{\ast})$, $p(w) = (w(0),w(m))$ is a fibration of digital images for any $w \in X^{[0,m]_{\mathbb{Z}}}$.
\end{definition}

\begin{definition}\cite{MelihKaraca}
	Let $[0,m]_{\mathbb{Z}}^{i}$ denote the $i-$th digital interval with endpoint $m$. Given $n$ digital intervals $[0,m_{1}]_{\mathbb{Z}}^{1}$, ... , $[0,m_{n}]_{\mathbb{Z}}^{n}$ and denote $J_{n}$ with the wedge of the digital intervals for $n \geq 1$ and $n \in \mathbb{N}$, where $0_{i} \in [0,m_{i}]_{\mathbb{Z}}^{i}$, $i = 1, ... ,n$, are identified. Let $X$ be a digitally connected space. Then the \textit{higher topological complexity of digital images} \[TC_{n}(X,\kappa) = genus_{\lambda_{\ast},\kappa_{\ast}}(e_{n}),\]
	where $e_{n} : (X^{J_{n}},\lambda_{\ast}) \longrightarrow (X^{n},\kappa_{\ast})$ is a fibration of digital images for any \linebreak $e_{n}(f) = (f((m_{1})_{1}),...,f((m_{n})_{n}))$ and $(m_{i})_{i}$, for each $i$, is the endpoint of the $i-$th interval. 
\end{definition}

\quad Note that, in digital spaces, the higher topological complexity has significant rules \cite{MelihKaraca}. One of them is that $TC_{1}$ is always equal to $1$. Another is the coincidence of $TC_{2}$ with $TC$, when $n=2$. Moreover, the number $TC_{n}$ is not greater than $TC_{n+1}$ at all. 
\begin{proposition} \cite{MelihKaraca}\label{p4}
	$TC_{n}(X,\kappa) = genus_{\kappa,\kappa_{*}}d_{n}$, where $d_{n} : (X,\kappa) \rightarrow (X^{n},\kappa_{*})$ is a diagonal map such that $\kappa_{*}$ is an adjacency relation for the image $X^{n}$.
\end{proposition}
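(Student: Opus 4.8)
The plan is to recognize $e_{n}$ as a digital fibrational substitute of the diagonal map $d_{n}$, and then to invoke the invariance of the digital Schwarz genus under fibrational substitutes. Recall from the remark following the definition of the Schwarz genus that the genus of a map coincides with the genus of its fibrational substitute, and by [Lemma 3.4,\cite{MelihKaraca}] this value is independent of the chosen substitute. Since $e_{n}:(X^{J_{n}},\lambda_{*}) \to (X^{n},\kappa_{*})$ is already a fibration of digital images by the definition of $TC_{n}$, and its codomain agrees with that of $d_{n}$, it will suffice to exhibit a digital homotopy equivalence $h:(X,\kappa) \to (X^{J_{n}},\lambda_{*})$ satisfying $e_{n} \circ h = d_{n}$.

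First I would define $h$ by sending a point $x \in X$ to the constant map $c_{x} \in X^{J_{n}}$ given by $c_{x}(j) = x$ for every $j \in J_{n}$. The compatibility with $e_{n}$ is then immediate, since evaluating $c_{x}$ at the prescribed endpoints gives
\[
e_{n}(h(x)) = \bigl(c_{x}((m_{1})_{1}),\ldots,c_{x}((m_{n})_{n})\bigr) = (x,\ldots,x) = d_{n}(x),
\]
so that $e_{n} \circ h = d_{n}$. It then remains to produce a homotopy inverse for $h$. For this I would take the evaluation map $g:(X^{J_{n}},\lambda_{*}) \to (X,\kappa)$ at the common wedge point $0$, namely $g(f) = f(0)$. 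The relation $g \circ h = \mathrm{id}_{X}$ then holds on the nose, because $g(c_{x}) = c_{x}(0) = x$.

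The substantive step, and the main obstacle, is to verify $h \circ g \simeq \mathrm{id}_{X^{J_{n}}}$ in the digital sense. Here I would use that the wedge $J_{n}$ of digital intervals is digitally contractible: fixing a digital contraction $\Phi:J_{n} \times [0,T]_{\mathbb{Z}} \to J_{n}$ with $\Phi(j,0)=j$ and $\Phi(j,T)=0$, obtained by sliding each interval toward the shared point $0$, I would set
\[
H:X^{J_{n}} \times [0,T]_{\mathbb{Z}} \longrightarrow X^{J_{n}}, \qquad H(f,t)(j) = f(\Phi(j,t)).
\]
At the endpoints this yields $H(f,0) = f$ and $H(f,T) = c_{f(0)} = h(g(f))$, exactly as required. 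The care needed is in checking that $H$ is a legitimate digital homotopy: one must confirm that each slice $H(\cdot,t)$ respects the function-space adjacency $\lambda_{*}$, so that adjacent functions are carried to adjacent functions, that each track $t \mapsto H(f,t)$ is $(2,\lambda_{*})$-continuous, and that the whole construction is compatible with the minimal adjacency conventions imposed on $J_{n}$ and on $X^{J_{n}}$. This is precisely where the digital framework departs most sharply from the classical one, since continuity of evaluation and of precomposition in digital function spaces is not automatic and has to be traced through the adjacency definitions recalled in the Preliminaries.

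Once $h$ is established as a digital homotopy equivalence, the identity $e_{n} \circ h = d_{n}$ exhibits $e_{n}$ as a fibrational substitute of $d_{n}$. Invariance of the digital Schwarz genus then gives $genus_{\kappa,\kappa_{*}}(d_{n}) = genus_{\lambda_{*},\kappa_{*}}(e_{n}) = TC_{n}(X,\kappa)$, which is the assertion of the proposition.
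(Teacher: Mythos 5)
Your proposal is correct and takes essentially the same route as the source: the paper states Proposition \ref{p4} without proof, citing \cite{MelihKaraca}, where the argument is precisely this one --- the constant-function inclusion $h(x) = c_{x}$ satisfies $e_{n} \circ h = d_{n}$ and is a digital homotopy equivalence (with evaluation at the wedge point as inverse), so $e_{n}$ is a digital fibrational substitute of $d_{n}$ and the identity $genus_{\kappa,\kappa_{*}}(d_{n}) = genus_{\lambda_{*},\kappa_{*}}(e_{n}) = TC_{n}(X,\kappa)$ follows from [Lemma 3.4, \cite{MelihKaraca}]. One small caution on the step you flagged: realize the contraction of $J_{n}$ by the jointly $1$-Lipschitz ``cap'' maps $\Phi(j,t) = \min(j, m_{i}-t)$ on each interval rather than the uniform slide $\max(j-t,0)$, since for the latter one has $|\Phi(j,t) - \Phi(j-1,t+1)| = 2$, which breaks the $(2,\lambda_{*})$-continuity of the tracks $t \mapsto H(f,t)$ under the function-space adjacency, whereas the cap version makes all the adjacency checks you listed go through.
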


\quad We finish this section with the the digital Lusternik-Schnirelmann category $cat_{\kappa}(X)$ of the image $(X,\kappa)$. In \cite{BorVer:2018}, $X$ is covered with $(k+1)$ sets $U_{1}, U_{2}, ..., U_{k+1}$ in the definition of the digital L-S category. We note that we use $k$ sets $U_{1}, U_{2}, ..., U_{k}$ to do it.
\begin{definition} \cite{BorVer:2018}
	\textit{The digital Lusternik-Schnirelmann category} of a digital image $X$ (denoted $cat_{\kappa}(X)$) is defined to be the minimum number $k$ for which there is a cover $\{U_{1}, U_{2}, ..., U_{k}\}$ of  $X$ that satisfies each inclusion map from $U_{i}$ to $X$, for $i = 1, ..., k$, is $\kappa-$nullhomotopic in $X$.
\end{definition}
\begin{example}\label{e4}
	Let $H = \{a, b, c, d, e, f, g, h\}$ be an image in $\mathbb{Z}^{2}$ for which it has $4-$adjacency (see Figure \ref{fig:2}) such that
	\begin{eqnarray*}
		&&a=(0,-1), \hspace*{0.2cm} b=(0,0), \hspace*{0.2cm} c=(0,1), \hspace*{0.2cm} d=(1,1),\\
		&&e=(2,1), \hspace*{0.2cm} f=(2,0), \hspace*{0.2cm} g=(2,-1), \hspace*{0.2cm} h=(1,-1).
	\end{eqnarray*}
	\begin{figure*}[h]
		\centering
		\includegraphics[width=0.35\textwidth]{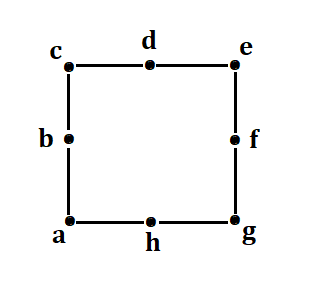}
		\caption{The Digital Image $H$.}
		\label{fig:2}
	\end{figure*}
	Since $H$ is not $4-$contractible, $cat_{4}(H) > 1$. We prove that $cat_{4}(H) = 2$. Let $M_{1} = \{b, c, d, e\}$ and $M_{2} = \{a,f,g,h\}$. Then $H = M_{1} \cup M_{2}$. We set the digital homotopy
	\[F_{1} : M_{1} \times [0,3]_{\mathbb{Z}} \rightarrow H\]
	\hspace*{5.3cm} $(m,s) \longmapsto F_{1}(m,s) =  \begin{cases}
	i_{1}(m), & s=0 \\
	i_{1}(m), & s=1, m=c,d,e\\
	c, & s=1, m=b\\
	i_{1}(m), &s=2, m=d,e\\
	d, & s=2, m=b,c\\
	e, & s=3,
	\end{cases}$
	where $i_{1} : M_{1} \rightarrow H$ is a digital inclusion map. Hence, $i_{1}$ is digitally nullhomotopic. Similarly, the digital homotopy 
	\[F_{2} : M_{2} \times [0,3]_{\mathbb{Z}} \rightarrow H\]
	\hspace*{5.3cm} $(m,s) \longmapsto F_{2}(m,s) =  \begin{cases}
	i_{2}(m), & s=0 \\
	i_{2}(m), & s=1, m=a,g,h\\
	g, & s=1, m=f\\
	i_{2}(m), &s=2, m=a,h\\
	h, & s=2, m=f,g\\
	a, & s=3,
	\end{cases}$
	where $i_{2} : M_{2} \rightarrow H$ is a digital inclusion map, shows that $i_{2}$ is digitally nullhomotopic (See Figure \ref{fig:1}). This shows that $cat_{4}(H) = 2$.
	\begin{figure*}[h]
		\centering
		\includegraphics[width=0.75\textwidth]{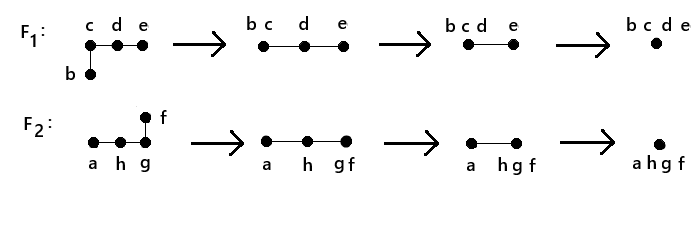}
		\caption{The Digital Homotopies $F_{1}$ and $F_{2}$ at each time $s$.}
		\label{fig:1}
	\end{figure*}
\end{example}

\begin{theorem} \cite{BorVer:2018}
	Let $\kappa$ and $\lambda$ be different adjacency relations with $\kappa > \lambda$ on a digital image $X$. Then \[cat_{\kappa}(X) \leq cat_{\lambda}(X).\]
\end{theorem}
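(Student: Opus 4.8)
The plan is to show that an optimal categorical cover for the finer adjacency $\lambda$ already serves as a categorical cover for the coarser adjacency $\kappa$, so that no larger cover is ever needed for $\kappa$. Concretely, write $cat_{\lambda}(X) = k$ and fix a cover $\{U_{1}, \dots, U_{k}\}$ of $X$ for which every inclusion $i_{j} : (U_{j},\lambda) \to (X,\lambda)$ is $\lambda$-nullhomotopic. It suffices to prove the single implication that a $\lambda$-nullhomotopic inclusion is $\kappa$-nullhomotopic; applying it to each $U_{j}$ exhibits the same family as a $\kappa$-categorical cover of cardinality $k$, whence $cat_{\kappa}(X) \le k = cat_{\lambda}(X)$. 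The one structural fact driving everything is that $\kappa > \lambda$ forces every $\lambda$-adjacent pair of points to be $\kappa$-adjacent; consequently every $\lambda$-connected subset is $\kappa$-connected, and constant maps remain continuous for either relation.

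First I would try to transfer the given homotopy verbatim. Let $F : U_{j} \times [0,m]_{\mathbb{Z}} \to X$ realize $i_{j} \simeq_{\lambda} c$ for a constant $c$. The endpoint requirements $F(\,\cdot\,,0) = i_{j}$ and $F(\,\cdot\,,m) = c$ are conditions on points only, so they persist unchanged when we regard $F$ as a candidate $\kappa$-homotopy. The temporal slices $F_{y} : ([0,m]_{\mathbb{Z}},2) \to X$ are already $(2,\lambda)$-continuous, and enlarging only the target adjacency from $\lambda$ to $\kappa$ cannot destroy continuity: by the point-pair criterion for digital maps, $(2,\lambda)$-continuity sends $2$-adjacent integers to $\lambda$-adjacent (or equal) points, which are then $\kappa$-adjacent (or equal) because $\lambda \subseteq \kappa$. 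Hence each $F_{y}$ is automatically $(2,\kappa)$-continuous.

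The remaining, and genuinely delicate, condition is that each spatial slice $F_{s} : (U_{j},\kappa) \to (X,\kappa)$ be $(\kappa,\kappa)$-continuous, and I expect this to be the main obstacle. The difficulty is asymmetric with the previous step: there the enlargement $\lambda \to \kappa$ acted on the \emph{codomain}, which is harmless, whereas here it acts on the \emph{domain}, creating new $\kappa$-adjacent pairs $y,y'$ of $U_{j}$ that need not be $\lambda$-adjacent and on which the hypothesis $(\lambda,\lambda)$-continuity of $F_{s}$ gives no information. By the point-pair criterion one must verify, for every such pair and every time $s$, that $F_{s}(y)$ and $F_{s}(y')$ remain $\kappa$-adjacent or equal; since a deformation can in principle push two $\kappa$-close points apart, this is not a formality. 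I therefore expect the real content to be the \emph{construction}, rather than the mere transfer, of the homotopy: one should produce a $\kappa$-nullhomotopy of $i_{j}$ that never separates $\kappa$-adjacent points, exploiting that $U_{j}$ is now more highly connected under $\kappa$ and so admits contractions unavailable for $\lambda$. Once the spatial slices are secured, all three defining conditions of a $\kappa$-homotopy hold, each $i_{j}$ is $\kappa$-nullhomotopic, and the inequality $cat_{\kappa}(X) \le cat_{\lambda}(X)$ follows.
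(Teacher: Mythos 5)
Your reduction of the theorem to the single implication ``a $\lambda$-nullhomotopic inclusion is $\kappa$-nullhomotopic'' is correct, and your bookkeeping of which homotopy conditions transfer verbatim is accurate: the endpoint conditions and the temporal slices $F_{y}$ survive, because enlarging only the \emph{codomain} adjacency from $\lambda$ to $\kappa$ preserves continuity. You have also correctly isolated the genuine obstruction: the spatial slices $F_{s}$ must be $(\kappa,\kappa)$-continuous, and the given $\lambda$-homotopy says nothing about pairs that are $\kappa$-adjacent but not $\lambda$-adjacent. This failure is real, not hypothetical. Take $U=\{(0,0),(1,1)\}$ inside an image $X\subseteq\mathbb{Z}^{2}$ with $\lambda=4$ and $\kappa=8$: since $U$ contains no $4$-adjacent pair, the $(4,4)$-continuity of every slice of a $4$-nullhomotopy is vacuous, so such a homotopy may drag the two points arbitrarily far apart, whereas an $8$-nullhomotopy must keep their tracks $8$-adjacent or equal at every time. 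So the verbatim transfer is indeed doomed, exactly as you say.

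The problem is that, having located the obstruction, you do not overcome it. The sentence ``one should produce a $\kappa$-nullhomotopy of $i_{j}$ that never separates $\kappa$-adjacent points, exploiting that $U_{j}$ is now more highly connected under $\kappa$'' is a declaration of intent, not a construction; the central implication is exactly as open at the end of your argument as at the beginning, so the proposal is not a proof. (Note also that the paper under discussion does not prove this statement either: it quotes it from \cite{BorVer:2018}, so the missing construction cannot be borrowed from the surrounding text and must be supplied.) To see the kind of work required: for a single $\kappa$-adjacent pair $y,y'$ with a contracting $\kappa$-path $\gamma$ from $y$ to $c$, one can let $y'$ first step onto $y$ (a legal $\kappa$-move, using precisely the new adjacency) and then follow $\gamma$ with a one-step delay; the two tracks are then consecutive points of a $\kappa$-path, hence $\kappa$-adjacent at all times, and both terminate at $c$. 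But extending this delayed-path idea to an arbitrary $U_{j}$ is delicate --- moving $y'$ onto $y$ can destroy the required adjacency between $F_{s}(y')$ and the images of other points of $U_{j}$ --- and some such argument (an induction over the points of $U_{j}$, or a reparametrization of the given $\lambda$-homotopy) is precisely the content the theorem needs. Nothing in your proposal provides it.
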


\begin{theorem}\cite{KaracaIs:2018}
	Let $(X,\kappa)$ be a digitally connected space such that $X \times X$ has $\kappa_{*}-$adjacency. Then \[cat_{\kappa}(X) \leq TC(X,\kappa) \leq cat_{\kappa_{*}}(X \times X).\]
\end{theorem}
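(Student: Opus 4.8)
The plan is to establish the two inequalities separately, in each case exploiting the description of $TC(X,\kappa)$ as the digital Schwarz genus of the end-point fibration $p:(X^{[0,m]_{\mathbb{Z}}},\lambda_{\ast})\longrightarrow(X\times X,\kappa_{\ast})$, $p(w)=(w(0),w(m))$. Throughout I fix a base point $x_{0}\in X$ and use that $X$, hence $X\times X$, is digitally connected, so that any two points are joined by a digital path.

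For the lower bound $cat_{\kappa}(X)\le TC(X,\kappa)$, suppose $TC(X,\kappa)=k$, so there is a cover $V_{1},\dots,V_{k}$ of $X\times X$ together with digital sections $s_{i}:V_{i}\longrightarrow X^{[0,m]_{\mathbb{Z}}}$ satisfying $p\circ s_{i}=id_{V_{i}}$. First I would restrict attention to the slice $\{(x,x_{0}):x\in X\}$, which is $\kappa_{\ast}$-isomorphic to $(X,\kappa)$, since when the second coordinate is held fixed the product adjacency collapses to $\kappa$. Setting $U_{i}=\{x\in X:(x,x_{0})\in V_{i}\}$ produces a cover of $X$. Then, for each $i$, I would define $H_{i}:U_{i}\times[0,m]_{\mathbb{Z}}\longrightarrow X$ by $H_{i}(x,t)=s_{i}(x,x_{0})(t)$; because $s_{i}(x,x_{0})$ is a digital path from $x$ to $x_{0}$, the map $H_{i}$ satisfies $H_{i}(x,0)=x$ and $H_{i}(x,m)=x_{0}$, exhibiting the inclusion $U_{i}\hookrightarrow X$ as $\kappa$-nullhomotopic. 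The only point to verify is digital continuity of $H_{i}$, which follows from the function-space adjacency: $\kappa$-adjacent $x,x'$ give $\kappa_{\ast}$-adjacent $(x,x_{0}),(x',x_{0})$, hence adjacent images $s_{i}(x,x_{0}),s_{i}(x',x_{0})$ in the path space, forcing $H_{i}(x,t)$ and $H_{i}(x',t)$ to be $\kappa$-adjacent. This yields $cat_{\kappa}(X)\le k$.

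For the upper bound $TC(X,\kappa)\le cat_{\kappa_{\ast}}(X\times X)$, suppose $cat_{\kappa_{\ast}}(X\times X)=k$ with a cover $U_{1},\dots,U_{k}$ of $X\times X$ for which each inclusion $U_{i}\hookrightarrow X\times X$ is $\kappa_{\ast}$-nullhomotopic, say to a constant $(a_{i},b_{i})$. Writing the nullhomotopy in coordinates as $(G_{i}^{1},G_{i}^{2})$, for every $(x,y)\in U_{i}$ the map $G_{i}^{1}$ supplies a digital path $x\rightsquigarrow a_{i}$ and $G_{i}^{2}$ a digital path $y\rightsquigarrow b_{i}$. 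Choosing once and for all a fixed digital path $\omega_{i}$ from $a_{i}$ to $b_{i}$ (available by digital connectedness), I would concatenate $x\rightsquigarrow a_{i}$, then $\omega_{i}$, then the reverse of $y\rightsquigarrow b_{i}$, to obtain a digital path from $x$ to $y$ depending digitally on $(x,y)$. This defines a section $s_{i}:U_{i}\longrightarrow X^{[0,m]_{\mathbb{Z}}}$ of $p$ over $U_{i}$, so taking $V_{i}=U_{i}$ realizes $genus_{\lambda_{\ast},\kappa_{\ast}}(p)\le k$, that is, $TC(X,\kappa)\le k$.

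The main obstacle I anticipate is purely digital-topological bookkeeping in the upper bound: a digital path space has a fixed parameter interval $[0,m]_{\mathbb{Z}}$, whereas concatenating three paths and reversing one naively changes the length. I would handle this by padding each piece with a constant tail so that all three segments, and hence all the sections $s_{i}$ simultaneously, live on one common interval, and by checking that the resulting assignment $(x,y)\mapsto s_{i}(x,y)$ preserves adjacency into $(X^{[0,m]_{\mathbb{Z}}},\lambda_{\ast})$. By contrast, the lower bound is comparatively routine once the slice identification and the function-space continuity are in place.
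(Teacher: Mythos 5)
Your proof is correct and follows essentially the same route as the source: the paper states this theorem without reproving it, citing \cite{KaracaIs:2018}, and the argument there is the digital adaptation of Farber's classical one that you reconstruct — restricting the motion-planning sections to the slice $X\times\{x_{0}\}\cong(X,\kappa)$ to get the lower bound, and converting a $\kappa_{*}$-nullhomotopy of $U_{i}\hookrightarrow X\times X$ into a section of $p$ by concatenating the two coordinate paths with a fixed path from $a_{i}$ to $b_{i}$ for the upper bound. Your anticipated bookkeeping is exactly the right digital-specific care: padding the concatenations with constant tails so all sections live on one interval $[0,m]_{\mathbb{Z}}$, and verifying continuity of $(x,y)\mapsto s_{i}(x,y)$ via the function-space adjacency (equivalently, the ``adjacent paths'' notion of \cite{KaracaIs:2018}), which is precisely where the digital setting departs from the topological one.
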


\section{Topological Complexity of Maps In Digital Images}
\label{sec:2}
\begin{proposition}\label{p3}
	Let $f : (X_{1},\kappa_{1}) \rightarrow (Y_{1},\lambda_{1})$ and $g :(X_{2},\kappa_{2}) \rightarrow (Y_{2},\lambda_{2})$ be two digitally continuous digital maps. Let $f \times g : (X_{1} \times X_{2},\kappa_{*}) \rightarrow (Y_{1} \times Y_{2},\lambda_{*})$ be the digital product map. Then we have that
	\[genus_{\kappa_{*},\lambda_{*}}(f \times g) \leq genus_{\kappa_{1},\lambda_{1}}(f) + genus_{\kappa_{2},\lambda_{2}}(g).\] 
\end{proposition}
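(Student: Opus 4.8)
The plan is to reduce the statement to a subadditivity property of the digital Schwarz genus for a product of digital fibrations, and then to build a sectional cover of the product base by regrouping products of the given sectional sets along ``anti-diagonals.''

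First I would pass from the maps to fibrations. Let $\widehat{f}:(Z_{1},\mu_{1}) \to (Y_{1},\lambda_{1})$ and $\widehat{g}:(Z_{2},\mu_{2}) \to (Y_{2},\lambda_{2})$ be digital fibrational substitutes of $f$ and $g$, with digital homotopy equivalences $h_{1}:X_{1} \to Z_{1}$ and $h_{2}:X_{2} \to Z_{2}$ satisfying $\widehat{f}\circ h_{1}=f$ and $\widehat{g}\circ h_{2}=g$. I claim $\widehat{f}\times\widehat{g}$ is a fibrational substitute of $f\times g$: the map $h_{1}\times h_{2}$ is again a digital homotopy equivalence and $(\widehat{f}\times\widehat{g})\circ(h_{1}\times h_{2})=f\times g$, while $\widehat{f}\times\widehat{g}$ has the digital homotopy lifting property because a lift against $\widehat{f}\times\widehat{g}$ can be produced coordinatewise from lifts against $\widehat{f}$ and against $\widehat{g}$. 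By the invariance of the genus under the choice of fibrational substitute [Lemma 3.4,\cite{MelihKaraca}], it then suffices to bound $genus_{\kappa_{*},\lambda_{*}}(\widehat{f}\times\widehat{g})$.

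Write $m=genus_{\kappa_{1},\lambda_{1}}(f)$ and $n=genus_{\kappa_{2},\lambda_{2}}(g)$, and fix covers $Y_{1}=\bigcup_{i=1}^{m}U_{i}$ and $Y_{2}=\bigcup_{j=1}^{n}V_{j}$ together with digital sections $s_{i}:U_{i}\to Z_{1}$ of $\widehat{f}$ and $t_{j}:V_{j}\to Z_{2}$ of $\widehat{g}$. Over each product set $U_{i}\times V_{j}$ the map $s_{i}\times t_{j}$ is a digitally continuous section of $\widehat{f}\times\widehat{g}$, which already yields the crude bound $genus\le mn$. To sharpen this to $m+n$ I would regroup the $mn$ product sets by the value of $i+j$: for $k=2,\dots,m+n$ put
\[
W_{k}=\bigcup_{\,i+j=k\,}U_{i}\times V_{j}.
\]
These $m+n-1$ sets cover $Y_{1}\times Y_{2}$, so it remains to equip each $W_{k}$ with a single digitally continuous section.

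The main obstacle is exactly this last step: although every individual piece $U_{i}\times V_{j}$ inside $W_{k}$ carries the section $s_{i}\times t_{j}$, a naive piecewise definition on $W_{k}$ need not be $\lambda_{*}$-continuous, since two $\lambda_{*}$-adjacent points of $W_{k}$ may lie in different pieces with incompatible section values. In the classical setting this is handled with partitions of unity, which are unavailable digitally. My plan is to resolve it by separating the pieces of each $W_{k}$: I would replace $\{U_{i}\times V_{j}:i+j=k\}$ by pairwise \emph{non-adjacent} subsets that still cover $W_{k}$, using the fact that if $A$ and $B$ are disjoint with no point of $A$ being $\lambda_{*}$-adjacent to a point of $B$, then every $\lambda_{*}$-connected subset of $A\cup B$ lies entirely in $A$ or in $B$, so a map that is continuous on each of $A$ and $B$ is continuous on $A\cup B$. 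Granting such a separation, combining the sections $s_{i}\times t_{j}$ over the separated pieces yields a digitally continuous section $\sigma_{k}$ over $W_{k}$ with $(\widehat{f}\times\widehat{g})\circ\sigma_{k}=id_{W_{k}}$. Hence $genus_{\kappa_{*},\lambda_{*}}(\widehat{f}\times\widehat{g})\le m+n-1\le m+n$, and by the reduction above this is the desired inequality. The delicate point to verify carefully is that the required non-adjacent separation of the anti-diagonal pieces can always be arranged inside the finite digital image $Y_{1}\times Y_{2}$; if it can only be achieved after sacrificing the economy of the anti-diagonal regrouping, one still lands at the stated bound $m+n$ rather than the sharper $m+n-1$.
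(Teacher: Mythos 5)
Your reduction to fibrational substitutes is sound and matches the paper's closing remark, but the argument stalls exactly at the step you flag, and the gap is real: the ``non-adjacent separation'' of the anti-diagonal pieces of $W_{k}$ cannot always be arranged. Take $Y_{1}=Y_{2}=[0,1]_{\mathbb{Z}}$ with the disjoint covers $U_{1}=\{0\}$, $U_{2}=\{1\}$ and $V_{1}=\{0\}$, $V_{2}=\{1\}$. Then $W_{3}=(U_{1}\times V_{2})\cup(U_{2}\times V_{1})=\{(0,1),(1,0)\}$, and under the four-condition product adjacency recalled in Section 2 (whose last clause allows both coordinates to move) the points $(0,1)$ and $(1,0)$ are $\lambda_{*}$-adjacent. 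Each lies in exactly one piece, so any subsets of the two pieces that still cover $W_{3}$ must retain both points, and they remain adjacent: no separation exists, and the piecewise section is forced to take the values $(s_{1}(0),t_{2}(1))$ and $(s_{2}(1),t_{1}(0))$, which need not be adjacent in $X_{1}\times X_{2}$. Your fallback sentence --- that sacrificing the anti-diagonal economy ``still lands at $m+n$'' --- is not an argument: once the regrouping fails, the only cover for which you actually hold sections is $\{U_{i}\times V_{j}\}$, which has $mn$ members, and $mn>m+n$ in general. So as written the proof establishes only $genus_{\kappa_{*},\lambda_{*}}(f\times g)\leq mn$.

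The good news is that your plan can be completed under the \emph{minimal} product adjacency, which is precisely the hypothesis the paper imposes elsewhere (e.g., in the definition of $\kappa$-topological groups): there an adjacent pair in $Y_{1}\times Y_{2}$ moves only one coordinate. First disjointify the covers by setting $U_{i}':=U_{i}\setminus(U_{1}\cup\cdots\cup U_{i-1})$, and similarly for $V_{j}'$; restrictions of sections are still digitally continuous sections. Now if $(u,v)\in U_{i}'\times V_{j}'$ and $(u',v')\in U_{i'}'\times V_{j'}'$ are $\lambda_{*}$-adjacent with $i+j=i'+j'=k$, then either $u=u'$, forcing $i=i'$ and hence $j=j'$, or $v=v'$, forcing $j=j'$ and hence $i=i'$; so every adjacent pair of $W_{k}$ lies in a single piece and the piecewise map $\sigma_{k}=s_{i}\times t_{j}$ on $U_{i}'\times V_{j}'$ is automatically digitally continuous --- no separation lemma is needed. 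This closes your gap and in fact yields the sharper bound $m+n-1$. Note that this repaired route is genuinely different from the paper's: the paper never regroups along anti-diagonals but instead rewrites the codomain via the cylinder covers $(U_{1}\times Y_{2})\cup\cdots\cup(U_{k}\times Y_{2})$ and $(Y_{1}\times V_{1})\cup\cdots\cup(Y_{1}\times V_{l})$ and asserts sections $w_{i}$ and $v_{j}$ over the cylinders, a step that is itself problematic (a section of $f\times g$ over $U_{i}\times Y_{2}$ would require a global section of $g$), so your argument, once restricted to the minimal adjacency and disjointified covers, is both tighter and more defensible than the paper's --- but you must make that adjacency hypothesis explicit, since the counterexample above shows the anti-diagonal trick fails for the full product adjacency.
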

\begin{proof}
	We first consider the fibrations $f$ and $g$ on digital spaces and shall show the desired result. After that, we assume that $f$ and $g$ are map of digital images, do not have to be fibrations, and complete the proof. Let $genus_{\kappa_{1},\lambda_{1}}(f)=k$ and $genus_{\kappa_{2},\lambda_{2}}(g)=l$. We shall show that $genus_{\kappa_{*},\lambda_{*}}(f \times \nolinebreak g) \leq k+l$. Since $genus_{\kappa_{1},\lambda_{1}}(f) = k$, we may partition the digital image $Y_{1}$ into the $k$ subsets $U_{1}, U_{2}, ..., U_{k}$ such that for all $i = 1, ..., k$, there exist digitally continuous maps $s_{i} : (U_{i},\tau_{i}) \rightarrow (X_{1},\kappa_{1})$ and $f \circ s_{i}$ is an identity map on the digital images $(X_{1},\kappa_{1})$. Similarly, if $genus_{\kappa_{2},\lambda_{2}}(g) = l$, then we may partition the digital image $Y_{2}$ into $l$ subsets $V_{1}, V_{2}, ..., V_{l}$ such that there exist digitally continuous maps $t_{j} : (V_{j},\sigma_{j}) \rightarrow (X_{2},\kappa_{2})$, for all $j = 1, ..., l$, and $g \circ t_{j}$ is an identity map on the digital images $(Z_{2},\kappa_{2})$.
	Consider the digital map 
	\begin{eqnarray*}
		f \times g : X_{1} \times X_{2} \rightarrow (U_{1} \cup ... \cup U_{k}) \times (V_{1} \cup ... \cup V_{l}).
	\end{eqnarray*} 
	We rewrite this map in $2$ different ways:
	\begin{eqnarray}\label{eq1}
	f \times g : X_{1} \times X_{2} \rightarrow (U_{1} \times Y_{2}) \cup ... \cup (U_{k} \times Y_{2})
	\end{eqnarray}
	and
	\begin{eqnarray}\label{eq2}
	f \times g : X_{1} \times X_{2} \rightarrow (Y_{1} \times V_{1}) \cup ... \cup (Y_{1} \times V_{l}).
	\end{eqnarray} 
	Consider the equation (\ref{eq1}). Then there exists a digitally continuous map
	\begin{eqnarray*}
		w_{i} : (U_{i} \times Y_{2}) \rightarrow X_{1} \times X_{2}
	\end{eqnarray*}
	such that $f \circ w_{i}$ is the identity on $Y_{1} \times Y_{2}$. Similarly, for the equation (\ref{eq2}), we have a digitally continuous map
	\begin{eqnarray*}
		v_{j} : (Y_{1} \times V_{j}) \rightarrow X_{1} \times X_{2}
	\end{eqnarray*}
	such that $f \circ v_{j}$ is the identity on $Y_{1} \times Y_{2}$. Moreover, some of $U_{i} \times V_{j}$, for each $i$ and $j$, can be the same in the union of sets. So we conclude that $genus_{\kappa_{*},\lambda_{*}}(f \times g)$ must be less than or equal to $k+l$. When $f$ and $g$ are not fibrations in the digital sense, we use their digital fibrational substitutes to show that the desired inequality holds and this completes the proof.
\end{proof}

\begin{proposition} \label{p5}
	For a fibration $p : (E,\lambda_{2}) \rightarrow (B,\lambda_{3})$ of digital spaces, \[genus_{\lambda_{2},\lambda_{3}}(p) \leq cat_{\lambda_{3}}(B).\]
	Moreover, if $(E,\lambda_{2})$ is digitally contractible, then $genus_{\lambda_{2},\lambda_{3}}(p) = cat_{\lambda_{3}}(B)$.
\end{proposition}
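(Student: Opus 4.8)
The statement is the digital analogue of Schwarz's classical inequality relating the sectional category (genus) of a fibration to the Lusternik--Schnirelmann category of its base, together with the classical fact that the genus attains this upper bound precisely when the total space can be contracted. Accordingly, the plan is to prove the two inequalities $genus_{\lambda_{2},\lambda_{3}}(p)\leq cat_{\lambda_{3}}(B)$ and $cat_{\lambda_{3}}(B)\leq genus_{\lambda_{2},\lambda_{3}}(p)$ separately, the first using only that $p$ is a digital fibration (hence enjoys the digital homotopy lifting property), and the second using in addition that $(E,\lambda_{2})$ is $\lambda_{2}$-contractible.

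For the first inequality, suppose $cat_{\lambda_{3}}(B)=k$ and fix a categorical cover $\{U_{1},\dots,U_{k}\}$ of $B$ so that each inclusion $\iota_{i}:(U_{i},\lambda_{3})\hookrightarrow(B,\lambda_{3})$ is $\lambda_{3}$-nullhomotopic. I would fix, for each $i$, a digital homotopy $H_{i}:U_{i}\times[0,m_{i}]_{\mathbb{Z}}\to B$ realizing the nullhomotopy, reparametrized so that $H_{i}(\cdot,0)$ is the constant map at some $b_{i}\in B$ and $H_{i}(\cdot,m_{i})=\iota_{i}$. Choosing a point $e_{i}\in p^{-1}(b_{i})$, the constant map $U_{i}\to\{e_{i}\}\subseteq E$ is a lift of $H_{i}(\cdot,0)$. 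Applying the digital homotopy lifting property of $p$ to $H_{i}$ with this initial lift produces a digital homotopy $\widetilde{H}_{i}:U_{i}\times[0,m_{i}]_{\mathbb{Z}}\to E$ with $p\circ\widetilde{H}_{i}=H_{i}$. Setting $s_{i}:=\widetilde{H}_{i}(\cdot,m_{i})$ gives a digitally continuous map with $p\circ s_{i}=H_{i}(\cdot,m_{i})=\iota_{i}$, i.e.\ a local section of $p$ over $U_{i}$. Hence $\{U_{1},\dots,U_{k}\}$ also witnesses $genus_{\lambda_{2},\lambda_{3}}(p)\leq k$.

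For the reverse inequality under the contractibility hypothesis, suppose $genus_{\lambda_{2},\lambda_{3}}(p)=\ell$ and fix a cover $\{V_{1},\dots,V_{\ell}\}$ of $B$ together with digitally continuous local sections $s_{j}:V_{j}\to E$ satisfying $p\circ s_{j}=\iota_{j}$, where $\iota_{j}:V_{j}\hookrightarrow B$ is the inclusion. Since $(E,\lambda_{2})$ is $\lambda_{2}$-contractible, fix a digital homotopy $G:E\times[0,N]_{\mathbb{Z}}\to E$ with $G(\cdot,0)=id_{E}$ and $G(\cdot,N)$ the constant map at some $e_{0}\in E$. I would then define $K_{j}:V_{j}\times[0,N]_{\mathbb{Z}}\to B$ by $K_{j}(v,t)=p\big(G(s_{j}(v),t)\big)$. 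Being a composite of digitally continuous maps, $K_{j}$ is a digital homotopy; it satisfies $K_{j}(\cdot,0)=p\circ s_{j}=\iota_{j}$ and $K_{j}(\cdot,N)=p(e_{0})$, a constant. Thus each inclusion $\iota_{j}$ is $\lambda_{3}$-nullhomotopic, so $\{V_{1},\dots,V_{\ell}\}$ is a categorical cover and $cat_{\lambda_{3}}(B)\leq\ell$. Combining the two inequalities yields the claimed equality.

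The main obstacle is the very first step of the lifting argument: producing the initial lift of the constant map $H_{i}(\cdot,0)$ requires the fiber $p^{-1}(b_{i})$ to be nonempty, i.e.\ it relies on surjectivity of $p$ (as holds for a digital fibration with nonempty total space over a $\lambda_{3}$-connected base). I would make this surjectivity explicit and, where needed, arrange the constant value $b_{i}$ to lie in the image of $p$. The remaining care is bookkeeping rather than conceptual: one must apply the digital homotopy lifting property with a matching interval length $m_{i}$ for each $i$, and verify that evaluation of a digital homotopy at a fixed time again yields a $\lambda$-continuous map, which follows from the defining slicewise continuity of digital homotopies together with [Proposition 2.5,\cite{Boxer:1999}] on composites of digitally continuous maps.
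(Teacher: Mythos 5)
Your proposal is correct and follows essentially the same route as the paper's own proof: the first inequality by lifting the contracting homotopy of each categorical set via the digital homotopy lifting property and taking the terminal slice as a local section, and the equality under contractibility by composing the local sections with the contraction of $E$ and projecting by $p$ to nullhomotope each inclusion. Your explicit attention to the nonemptiness of the fiber $p^{-1}(b_{i})$ and the reparametrization of the nullhomotopies is a minor tightening of details the paper leaves implicit, not a different argument.
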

\vspace*{-0.5cm}
\begin{proof}
	First, we shall show that $genus_{\lambda_{2},\lambda_{3}}(p) \leq cat_{\lambda_{3}}(B)$. Let $cat_{\lambda_{3}}(B) = k$. Then we have $k$ digital covering made by $k$ subset $\{U_{1}, U_{2}, ... , U_{k}\}$ of $B$, where each inclusion $U_{i} \rightarrow B$ for $i=1,...,k$ is digitally $\lambda_{3}-$null-homotopic in $B$. Assume $U \subseteq B$, where $U$ is one of the sets in the covering of $B$ and consider the following diagram for the positive integer $m$:
	\begin{displaymath}
	\xymatrix{
		U \times \{0\} \ar[r]^{c_{0}} \ar[d]_i &
		E \ar[d]^{p} \\
		U \times [0,m]_{\mathbb{Z}} \ar[r]^{H} \ar@{.>}[ur]^{G} & B,}
	\end{displaymath}
	where $i$ is the digital inclusion map. For any $t \in [0,m]_{\mathbb{Z}}$ and $b \in U$, $c_{0}$ is the digital constant map defined by $c_{0}(b,0) = e_{0}$, where $e_{0}$ is a chosen point in $p^{-1}(b_{0})$, for any basepoint $b_{0} \in B$. $H$ is a digital contracting homotopy between the digital constant map at the basepoint $b_{0}$ and the digital inclusion map $U \hookrightarrow B$. Using the digital homotopy lifting property, there is a digital map $G$ for which $G \circ i = c_{0}$ and $p \circ G = H$. It follows that \[p \circ G(x,m) = H(x,m) = id_{U}.\] If we take $G(x,m)$ as $G_{m}(x)$, then $G_{m}$ is a digital section of $p$ over $U$. Hence, we get the desired result.\\
	We now prove the second claim. Let $E$ be a digitally $\lambda_{2}-$contractible digital image. Let $genus_{\lambda_{2},\lambda_{3}}(p) = n$. Then there exists $A_{1}, A_{2}, ..., A_{n}$ of $B$ and, for each $A_{i}$, $s_{i} : A_{i} \to E$ is digitally continuous having that $p \circ s_{i} = 1_{A_{i}}$, where $1 \leq i \leq n$. Since $E$ is digitally contractible, $id_{E}$ is homotopic to the constant map on $E$ in digital images. Let us denote this digital homotopy with $H$. For any arbitrary $A_{i} \subset B$, we have the following construction: 
	\begin{displaymath} G : A_{i} \times [0,m]_{\mathbb{Z}} \stackrel{s_{i} \times id}{\longrightarrow} E \times [0,m]_{\mathbb{Z}} \stackrel{H}{\longrightarrow} E \stackrel{p}{\longrightarrow} B. 
	\end{displaymath}
	For all $a \in A$ and $t \in [0,m]_{\mathbb{Z}}$, conditions for being a digital homotopy of $G$ are held:
	\begin{eqnarray*}
		&& G(a,0) = p \circ H \circ (s \times id)(a,0) = p \circ H(s(a),0) = p \circ s(a) = id_{A}(a), \hspace*{0.1cm} \text{and} \\
		&& G(a,1) = p \circ H \circ (s \times id)(a,1) = p \circ H(s(a),1) = p \circ c_{s(a)}(a) = c_{p \circ s(a)}(a),
	\end{eqnarray*}
	where $c_{s(a)}$ is a constant digital map on $E$ at the point $s(a) \in E$ and $c_{p \circ s(a)}$ is a constant digital map on B at the point $p \circ s(a) \in B$. Moreover the digital maps $G|_{a} :[0,m]_{\mathbb{Z}} \to B$ and $G|_{t} : A \to B$ are digitally continuous. As a consequence, for all $1 \leq i \leq n$, the digital maps $A_{i} \to B$ is digitally nullhomotopic and thus we obtain $cat_{\lambda_{3}}(B) = n$.
\end{proof}

\quad By Proposition \ref{p5}, we immediately have the following:

\begin{proposition}
	For any connected digital image $(X,\kappa_{1})$ such that $X^{n}$ has $\kappa_{*}-$adjacency, we have that \[TC_{n}(X,\kappa_{1}) \leq cat_{\kappa_{*}}(X^{n}).\]
\end{proposition}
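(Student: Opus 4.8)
The plan is to derive this directly from Proposition \ref{p5} by exhibiting $TC_{n}(X,\kappa_{1})$ as the Schwarz genus of a fibration whose total space is digitally contractible-enough to apply the first inequality. Recall from Proposition \ref{p4} that $TC_{n}(X,\kappa_{1}) = genus_{\kappa_{1},\kappa_{*}}(d_{n})$, where $d_{n}:(X,\kappa_{1}) \to (X^{n},\kappa_{*})$ is the diagonal map. The key observation is that $d_{n}$ is not itself a fibration, so by the definition of the digital Schwarz genus of a map we must pass to its digital fibrational substitute $\widehat{d_{n}}:(Z,\kappa_{3}) \to (X^{n},\kappa_{*})$, and $genus_{\kappa_{1},\kappa_{*}}(d_{n}) = genus_{\kappa_{3},\kappa_{*}}(\widehat{d_{n}})$ by the invariance result cited after the Schwarz genus definition.

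The main step is then to apply Proposition \ref{p5} to the fibration $\widehat{d_{n}}:(Z,\kappa_{3}) \to (X^{n},\kappa_{*})$ with base $B = X^{n}$. This gives immediately
\[
genus_{\kappa_{3},\kappa_{*}}(\widehat{d_{n}}) \leq cat_{\kappa_{*}}(X^{n}).
\]
Combining the three displayed equalities and inequality yields
\[
TC_{n}(X,\kappa_{1}) = genus_{\kappa_{1},\kappa_{*}}(d_{n}) = genus_{\kappa_{3},\kappa_{*}}(\widehat{d_{n}}) \leq cat_{\kappa_{*}}(X^{n}),
\]
which is the desired bound. Thus the entire argument reduces to citing Proposition \ref{p4} to realize $TC_{n}$ as a Schwarz genus, replacing $d_{n}$ by a genuine fibration via its fibrational substitute, and invoking the first inequality of Proposition \ref{p5}.

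I expect the only genuine obstacle to be a bookkeeping one: verifying that the adjacency relation $\kappa_{3}$ on the total space $Z$ of the fibrational substitute is the correct one to feed into Proposition \ref{p5}, and that the digital connectivity hypotheses needed for the substitute to exist (the base and relevant spaces being digitally connected) are indeed satisfied. Since $X$ is assumed connected and $X^{n}$ carries the product adjacency $\kappa_{*}$, these hypotheses hold, and the fibrational substitute of $d_{n}$ exists by the definition recalled in the Preliminaries. Everything else is a formal chaining of previously established results, so the proof should be short.
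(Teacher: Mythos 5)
Your proof is correct and is essentially the paper's own argument: the paper simply applies Proposition \ref{p5} with base $B = X^{n}$ to the fibration $e_{n} \colon X^{J_{n}} \to X^{n}$ that defines $TC_{n}$, and your detour through Proposition \ref{p4} and the fibrational substitute $\widehat{d_{n}}$ unwinds to exactly this, since $e_{n}$ serves as a fibrational substitute of the diagonal map $d_{n}$. One minor remark: the first inequality of Proposition \ref{p5} requires no contractibility of the total space, so your opening phrase about the total space being ``digitally contractible-enough'' is a red herring --- the argument goes through for an arbitrary digital fibration over $X^{n}$.
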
	

\begin{proposition}
	Let $(X,\kappa_{1})$ be a connected digital image. Then we have
	\[cat_{\lambda_{*}}(X^{n-1}) \leq TC_{n}(X,\kappa_{1}),\]
	where $\lambda_{*}$ is an adjacency relation on $X^{n-1}$. 
\end{proposition}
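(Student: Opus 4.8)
The plan is to realize $cat_{\lambda_{*}}(X^{n-1})$ as the digital Schwarz genus of a fibration obtained by \emph{restricting} the defining fibration $e_{n}$ of $TC_{n}$, and then to combine the monotonicity of the genus under restriction with the contractibility clause of Proposition \ref{p5}. Fix a basepoint $x_{0}\in X$ and consider the sub-digital-image $X^{n-1}\times\{x_{0}\}$ of $X^{n}$; since the last coordinate is fixed, the adjacency $\kappa_{*}$ restricts on this slice to the product adjacency $\lambda_{*}$, so $X^{n-1}\times\{x_{0}\}$ is $\lambda_{*}$-isomorphic to $X^{n-1}$. I would first form the restriction of $e_{n}:(X^{J_{n}},\lambda_{*})\rightarrow(X^{n},\kappa_{*})$ over this slice, writing $E'=e_{n}^{-1}(X^{n-1}\times\{x_{0}\})$ and $e_{n}':E'\rightarrow X^{n-1}\times\{x_{0}\}$ for the restricted map. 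Concretely, $E'$ is the digital function subspace of those $f\in X^{J_{n}}$ with $f((m_{n})_{n})=x_{0}$, and since a restriction of a digital fibration is again a digital fibration, $e_{n}'$ is a fibration.

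Next I would prove the monotonicity $genus_{\lambda_{*},\lambda_{*}}(e_{n}')\leq genus_{\lambda_{*},\kappa_{*}}(e_{n})$. If $\{V_{1},\dots,V_{k}\}$ is a cover of $X^{n}$ realizing $TC_{n}(X,\kappa_{1})=genus_{\lambda_{*},\kappa_{*}}(e_{n})=k$, with digital sections $s_{i}:V_{i}\rightarrow X^{J_{n}}$ of $e_{n}$, then $\{V_{i}\cap(X^{n-1}\times\{x_{0}\})\}$ covers the slice; for $v$ in this intersection we have $e_{n}(s_{i}(v))=v\in X^{n-1}\times\{x_{0}\}$, so $s_{i}(v)\in E'$ and $s_{i}$ restricts to a digital section of $e_{n}'$. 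Discarding empty intersections leaves at most $k$ sets, whence $genus(e_{n}')\leq k=TC_{n}(X,\kappa_{1})$. This step is routine, since sections simply restrict.

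It then remains to identify $genus(e_{n}')$ with $cat_{\lambda_{*}}(X^{n-1})$ via the second part of Proposition \ref{p5}, for which it suffices that the total space $E'$ be digitally contractible; Proposition \ref{p5} then yields $genus_{\lambda_{*},\lambda_{*}}(e_{n}')=cat_{\lambda_{*}}(X^{n-1}\times\{x_{0}\})=cat_{\lambda_{*}}(X^{n-1})$. The contraction I have in mind is $H(f,t)=f\circ r_{t}$, where $r_{t}:J_{n}\rightarrow J_{n}$ is a digital deformation retraction of the wedge $J_{n}$ onto its terminal vertex $(m_{n})_{n}$ that fixes $(m_{n})_{n}$ at every time (push each spoke inward to the wedge point and then outward along the $n$-th spoke, one lattice step per unit time), with $r_{0}=\mathrm{id}$ and $r_{T}$ the constant map at $(m_{n})_{n}$. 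Because $r_{t}$ fixes $(m_{n})_{n}$, each $H(f,t)$ still sends $(m_{n})_{n}$ to $x_{0}$ and hence lies in $E'$, while $H(f,0)=f$ and $H(f,T)$ is the constant map at $x_{0}$; precomposition with the fixed continuous $r_{t}$ preserves the function-space adjacency, so $H$ is a digital homotopy. Combining the two displays gives $cat_{\lambda_{*}}(X^{n-1})=genus(e_{n}')\leq TC_{n}(X,\kappa_{1})$, as desired.

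The main obstacle I anticipate is the explicit construction and verification of the digital deformation retraction $r_{t}$ of $J_{n}$ onto the leaf $(m_{n})_{n}$: digital homotopies may move a point only to an adjacent lattice point at each time, so one must write down the geodesic-contraction formula on the tree $J_{n}$ (for instance, at time $t$ send a point at tree-distance $d$ from $(m_{n})_{n}$ to distance $\max(d-t,0)$) and check that it is digitally continuous jointly in both variables, in particular that adjacent points of $J_{n}$ have images that remain equal or adjacent across the wedge point. Everything else---the monotonicity under restriction and the invocation of Proposition \ref{p5}---is comparatively mechanical.
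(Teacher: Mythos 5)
You have reconstructed precisely the proof the paper intends: the paper prints no argument for this proposition and simply instructs the reader to adapt Proposition 3.1 of Basabe--Gonz\'{a}lez--Rudyak--Tamaki, and your route---restrict $e_{n}$ over the slice $X^{n-1}\times\{x_{0}\}$, note that sections restrict so the digital Schwarz genus can only drop, show the restricted total space $E'$ of maps sending the leaf $(m_{n})_{n}$ to $x_{0}$ is digitally contractible, and invoke the equality clause of Proposition \ref{p5}---is exactly that classical adaptation. One caveat on the step you yourself flagged: the speed-one geodesic formula $d\mapsto\max(d-t,0)$ is \emph{not} continuous in $t$ under the Lupton--Oprea--Scoville function-space adjacency (for adjacent $x,x'$ with $x'$ one step nearer the leaf, $r_{t}(x)$ and $r_{t+1}(x')$ sit at tree-distance $2$, so for $f$ injective along that geodesic the functions $f\circ r_{t}$ and $f\circ r_{t+1}$ fail to be adjacent), so you should either work with the paper's pointwise ``adjacent paths'' relation, under which your formula is fine, or replace it with a sweep-type contraction such as $s\mapsto\min(s,\,m_{i}-t)$ on each spoke $i\neq n$ followed by $s\mapsto\max(s,t)$ along the $n$-th spoke, which does make consecutive stages adjacent.
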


\quad The proof can be modified in digital images with [Proposition 3.1,\cite{BasGonRudTamaki:2014}]. One can easily adapt the proof from topological spaces to digital images. The last two results give bounds for $TC_{n}$ using $cat$ in digital images.

\begin{corollary}\label{p2}
	Let $(X,\kappa_{1})$ be a connected digital image. Then
	\[cat_{\lambda_{*}}(X^{n-1}) \leq TC_{n}(X,\kappa_{1}) \leq cat_{\kappa_{*}}(X^{n}),\]
	where $\lambda_{*}$ and $\kappa_{*}$ is an adjacency relation on $X^{n-1}$ and $X^{n}$, respectively. 
\end{corollary}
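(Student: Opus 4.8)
The plan is to obtain this two-sided estimate by simply chaining the two propositions that immediately precede it, since the corollary is explicitly stated to follow from them. First I would apply the lower-bound proposition, which guarantees that for a connected digital image $(X,\kappa_{1})$ one has $cat_{\lambda_{*}}(X^{n-1}) \leq TC_{n}(X,\kappa_{1})$, with $\lambda_{*}$ the chosen adjacency on the $(n-1)$-fold product. Next I would apply the upper-bound proposition, which supplies $TC_{n}(X,\kappa_{1}) \leq cat_{\kappa_{*}}(X^{n})$ once $X^{n}$ is equipped with its $\kappa_{*}$-adjacency. Combining these by transitivity of $\leq$ yields exactly $cat_{\lambda_{*}}(X^{n-1}) \leq TC_{n}(X,\kappa_{1}) \leq cat_{\kappa_{*}}(X^{n})$.

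Because the hypotheses of both propositions are already subsumed by assuming $(X,\kappa_{1})$ connected and fixing the product adjacencies $\lambda_{*}$ on $X^{n-1}$ and $\kappa_{*}$ on $X^{n}$, no extra construction is required; the two inequalities are literally concatenated. The only point I would take care over is to confirm that the middle term $TC_{n}(X,\kappa_{1})$ denotes the same invariant in both propositions, namely the digital Schwarz genus $genus_{\kappa,\kappa_{*}}(d_{n})$ of the diagonal map from Proposition \ref{p4}, so that transitivity applies to a genuinely shared quantity rather than to two differently normalized versions of $TC_{n}$.

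There is essentially no obstacle here: all of the mathematical content is already carried by the two source propositions, whose own proofs rest on the genus and category comparison of Proposition \ref{p5} together with its product analogue. If any verification is warranted, it is purely bookkeeping, namely checking that the adjacency relations on the iterated products are fixed consistently across the statement, so that the symbols $\lambda_{*}$ and $\kappa_{*}$ carry a single unambiguous meaning throughout the chain of inequalities.
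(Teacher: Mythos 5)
Your proposal matches the paper exactly: the corollary carries no independent proof there either, being obtained by concatenating the preceding upper-bound proposition ($TC_{n}(X,\kappa_{1}) \leq cat_{\kappa_{*}}(X^{n})$, itself a consequence of Proposition \ref{p5}) with the lower-bound proposition ($cat_{\lambda_{*}}(X^{n-1}) \leq TC_{n}(X,\kappa_{1})$, adapted from Basabe--Gonzalez--Rudyak--Tamaki). Your added check that the middle term is the same invariant, namely $genus_{\kappa,\kappa_{*}}(d_{n})$ via Proposition \ref{p4}, is sound and consistent with the paper's conventions.
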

\section{$\kappa-$Topological Groups In Digital Images}
\label{sec:3}
\quad We now have a new approach to compute $TC_{n}$ numbers of some of digital images. Our main equipment is the notion of topological groups in the digital sense.
\begin{definition}
	Let $(H,\kappa)$ be a digital image and $(H,\ast)$ be a group. Assume that the digital image $H \times H$ has a minimal adjacency relation for the cartesian product. If 
	\[\alpha : H \times H \to H \hspace*{1.0cm} \text{and} \hspace*{1.0cm} \beta : H \to H,\] 
	defined by $\alpha(y,z) = y \ast z$ and $\beta(y) = y^{-1}$, for all $y$, $z \in H$, respectively, are digitally continuous, then $(H,\kappa,\ast)$ is called a $\kappa-$topological group.
\end{definition}
 
\quad Notice that the hypothesis of minimality is necessary for $H \times H$. It is easy to see that $(\mathbb{Z},2,+)$ cannot be a $2-$topological group. Indeed, consider the digital map \newpage
\[\alpha: \mathbb{Z} \times \mathbb{Z} \to \mathbb{Z}\]
\hspace*{5.9cm}$(x,y) \longmapsto \alpha(x,y) = x+y$

and choose $8-$adjacency for $\mathbb{Z} \times \mathbb{Z}$. $(3,5)$ and $(4,6)$ are $8-$adjacent but $8$ and $10$ are not $2-$adjacent in $\mathbb{Z}$. It shows that $\alpha$ cannot be a digitally continuous map. But if we choose the minimal adjacency (\text{$4-$adjacency}) for $\mathbb{Z} \times \mathbb{Z}$, then $\alpha$ is a digitally continuous map. Hence, $(\mathbb{Z},2,+)$ is a $2-$topological group. Inversely, we note a difference between topological spaces and digital images: In topological spaces, $(\mathbb{R}^{\ast},\tau_{s},\cdot)$ is a topological group, where $\mathbb{R}^{\ast}$ denotes the set $\mathbb{R} - \{0\}$. This does not give a response in digital images. Consider the triple $(\mathbb{Z}^{\ast},2,\cdot)$, where $\mathbb{Z}^{\ast} = \mathbb{Z} - \{0\}$. Even $\mathbb{Z}^{\ast}$ is not a monoid under $\cdot$ because the inverse of $2$ does not exists. As a result $(\mathbb{Z}^{\ast},2,\cdot)$ does not have a $2-$topological group structure.

\quad We begin with a trivial example of $\kappa-$topological groups. We give another example with a different construction.
\begin{example} \label{e1}
	Let $G = \{-1,1\} \subset \mathbb{Z}$ be a digital image. Then $G$ is a group under $\cdot$ in $\mathbb{Z}$. Consider the digital maps \[\alpha : G \times G \longrightarrow G\]
	\hspace*{5.2cm} $(x,y) \longmapsto \alpha(x,y) = x \cdot y$\\
	and \[\beta : G \longrightarrow G\] \hspace*{5.7cm} $x \longmapsto \beta(x) = x$.
	
	In the domains of $\alpha$ and $\beta$, there does not exist any adjacent pair of points. It means that $\alpha$ and $\beta$ are trivially digitally continuous. Consequently, $(G,2,\cdot)$ is a $2-$topological group.
\end{example}

\begin{example}\label{e2}
	Given an integer $m$, let $H = [m,m+1]_{\mathbb{Z}} \subset \mathbb{Z}$. For having a group construction on $H$, take a binary operation $\ast$ such that for all $a,b \in H$,
	\[\ast (a,b) = \begin{cases}
	m, & a=b \\ m+1, & a \neq b.
	\end{cases}\]
	The digital map
	\[ \alpha : H \times H \rightarrow H\]
	\hspace*{5.7cm} $(a,b) \longmapsto \alpha(a,b) = a \ast b$
	
	is digitally continuous because of the fact that $a \ast b = m$ or $m+1$. In addition, another digital map
	\[\beta : H \rightarrow H\]
	\hspace*{5.9cm} $m \longmapsto \beta(m) = m$\\
	\hspace*{5.3cm} $m+1 \longmapsto \beta (m+1) = m+1$
	
	is clearly digitally continuous. It shows that $(H,2,\ast)$ is a $2-$topological group.
\end{example}	

\begin{theorem}
	Let $m$ be any integer. Then there is no $2-$topological group structure on the digital interval $[m,m+p-1]_{\mathbb{Z}}$, for all prime $p \geq 3$.
\end{theorem}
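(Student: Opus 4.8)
The plan is to argue by contradiction, using only the continuity of the multiplication map $\alpha$; the inversion map $\beta$ will not be needed. Suppose $(H,2,\ast)$ were a $2$-topological group with $H = [m,m+p-1]_{\mathbb{Z}} = \{m, m+1, \ldots, m+p-1\}$, a set of $p$ consecutive integers, and let $e$ denote the group identity. Since $H \subset \mathbb{Z}$, the product $H \times H$ sits in $\mathbb{Z}^2$, whose minimal adjacency relation is $4$-adjacency; thus $(a,b)$ and $(a',b')$ are adjacent in $H \times H$ exactly when they differ by $1$ in a single coordinate. First I would rewrite digital continuity of $\alpha$ in the equivalent adjacency form, namely that adjacent points map to equal or $2$-adjacent points. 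Fixing the second coordinate $b$ and letting the first vary, the pair $(a,b),(a+1,b)$ is adjacent, so $\alpha(a,b) = a \ast b$ and $\alpha(a+1,b) = (a+1)\ast b$ must satisfy $|a \ast b - (a+1)\ast b| \leq 1$. In other words, every right translation $R_b : H \to H$, $R_b(a) = a \ast b$, is a $1$-Lipschitz bijection of the consecutive integers.

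The central step is a combinatorial lemma: a bijection $\phi$ of $\{m, \ldots, m+p-1\}$ onto itself with $|\phi(x+1)-\phi(x)| \leq 1$ for all consecutive $x$ is necessarily either the identity or the reversal $\rho(x) = 2m + p - 1 - x$. I would prove this by noting that injectivity forces $\phi(x+1) \neq \phi(x)$, hence $|\phi(x+1)-\phi(x)| = 1$ exactly; the sequence $\phi(m), \ldots, \phi(m+p-1)$ is then a walk on the path graph on $\{m,\ldots,m+p-1\}$ taking only $\pm 1$ steps and hitting each vertex once, i.e. a Hamiltonian path. Since every interior vertex of a path graph has degree $2$ and the two endpoints have degree $1$, such a walk must start at an endpoint and proceed monotonically, so only the increasing and the decreasing enumerations occur. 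This pins $\phi$ down to the identity or $\rho$.

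To finish, observe that in any group $R_b$ is a bijection, and $R_b$ is the identity only when $b = e$ (set $a=e$ in $a \ast b = a$). Hence for each of the $p-1$ non-identity elements $b$, the lemma forces $R_b = \rho$. But if $R_{b_1} = R_{b_2} = \rho$, then $a \ast b_1 = a \ast b_2$ for all $a$, and cancellation (taking $a = e$) gives $b_1 = b_2$. Since $p \geq 3$ supplies at least two distinct non-identity elements, this is a contradiction, so no such group structure exists. The main obstacle is the combinatorial lemma on $1$-Lipschitz bijections; once it is in place the group-theoretic conclusion is immediate. I would also remark that the argument in fact only uses $p \geq 3$ rather than primality, so the statement holds for every digital interval with at least three points, with Example \ref{e2} on $[m,m+1]_{\mathbb{Z}}$ showing that the threshold $p \geq 3$ is sharp.
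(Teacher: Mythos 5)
Your proof is correct, and it takes a genuinely different route from the paper's. The paper argues concretely for $p=3$: primality forces the group to be cyclic of order $3$, and a case analysis on whether the identity is $m$, $m+1$ or $m+2$ produces a failure of digital continuity of the inversion $\beta$ or of $\alpha$ in each case; the extension to $p>3$ is then only sketched, via the remark that the endpoints have one neighbour while the group symmetry would force every element to have two. You instead prove a uniform structural lemma: continuity of $\alpha$ alone makes every right translation $R_b$ a $1$-Lipschitz bijection of the interval, and injectivity turns the sequence $R_b(m),\dots,R_b(m+p-1)$ into a unit-step self-avoiding walk covering the whole interval, hence monotone, so $R_b$ is the identity or the reversal $\rho$; since $R_b=\mathrm{id}$ forces $b=e$ and two distinct non-identity elements cannot both satisfy $R_b=\rho$ (left cancellation at $a=e$), any interval with at least three points admits no such structure. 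Your approach buys three things: it never uses $\beta$; it eliminates primality entirely, so the conclusion holds for every interval of length at least $3$, with Example \ref{e2} showing sharpness at length $2$ exactly as you note; and it gives a complete argument where the paper's $p>3$ case is only a sketch. It also sidesteps a fragile point in the paper's second case, where the exhibited pair $(m,m+1)$ and $(m+1,m+2)$ is not adjacent under the minimal ($4$-)adjacency imposed on $H\times H$, so the stated contradiction there needs repair (e.g.\ comparing $\alpha(m,m)=m+2$ with $\alpha(m,m+1)=m$ instead); your translation lemma makes all such pointwise case-checking unnecessary. What the paper's route buys in exchange is brevity for $p=3$ and a concrete display of the obstruction in terms of the group table.
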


\begin{proof}
	Let $p = 3$. Assume that $[m,m+2]_{\mathbb{Z}}$ has  $2-$topological group structure with any group operation $\ast$ and the $2-$adjacency relation. It means that $([m,m+2]_{\mathbb{Z}},\ast)$ is a group in the algebraic sense. Moreover, the digital maps 
	\[\alpha : [m,m+2]_{\mathbb{Z}} \times [m,m+2]_{\mathbb{Z}} \rightarrow [m,m+2]_{\mathbb{Z}} \hspace*{0.2cm} \text{and} \hspace*{0.2cm} \beta : [m,m+2]_{\mathbb{Z}} \rightarrow [m,m+2]_{\mathbb{Z}}\] 
	are digitally continuous. Then there are three cases for identity element of the group: $e_{[m,m+2]_{\mathbb{Z}}}$ is equal to only one of $m, m+1$ and $m+2$. Assume that $m$ is the identity element. Since $3$ is prime, every group of $3$ elements is the cyclic group of order $3$. Moreover, the set $\{m,m+1,m+2\}$ is an abelian group and every element different from the identity is a generator. This gives us the following properties:
	\begin{eqnarray*}
		&&(m+2)\ast(m+2) = (m+1),\\
		&&(m+1)\ast(m+2) = (m+2)\ast(m+1) = e_{[m,m+2]_{\mathbb{Z}}},\\
		&&(m+1)^{-1} = m+2 \hspace*{0.3cm} \text{and} \hspace*{0.3cm} (m+2)^{-1} = m+1.\\
	\end{eqnarray*}
	If $e_{[m,m+2]_{\mathbb{Z}}} = m$, then we find $\beta(m) = m$ and $\beta(m+1) = m+2$. This means that $\beta$ is not digitally continuous. This is a contradiction.
	Now consider the second case. In other words, let $m+1$ be an identity element of the group. Then $\alpha$ is not digitally continuous because we get
	\begin{eqnarray*}
		\alpha(m,m+1) = m \hspace*{0.3cm} \text{and} \hspace*{0.3cm} \alpha(m+1,m+2) = m+2.
	\end{eqnarray*}
	This is again contradiction. Consider the third case, i.e., $m+2$ is the identity element of the group. The case is symmetric to the case $e_{[m,m+2]_{\mathbb{Z}}} = m$ since the map that swaps $m$ and $m+2$ is an isomorphism of digital images. As a consequence, $([m,m+2]_{\mathbb{Z}},2,\ast)$ cannot be a $2-$topological group. If $p$ is a prime with $p > 3$, then the idea can be generalized because we have two elements, namely the endpoints $m$ and $m+p-1$, that have only one adjacent element, while, by the symmetry induced by the group action, each element have precisely two adjacent elements. 
\end{proof}

\begin{proposition}\label{p1}
	Let $(H,\kappa,\ast)$ and $(H,^{'}\lambda,\circ)$ be a $\kappa-$topological group and a $\lambda-$topological group, respectively. Then their cartesian product $H \times H^{'}$ is also a $\kappa_{*}$ topological group, where $\kappa_{*}$ is a minimum adjacency relation for the image $H \times H.^{'}$
\end{proposition}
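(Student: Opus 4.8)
The plan is to equip $H \times H'$ with the direct product group structure and then verify that its two structure maps are digitally continuous by reducing them to the structure maps of the two factors. Write $\alpha_H : H \times H \to H$, $\beta_H : H \to H$ and $\alpha_{H'} : H' \times H' \to H'$, $\beta_{H'} : H' \to H'$ for the multiplication and inversion maps of the given $\kappa$- and $\lambda$-topological groups, all of which are digitally continuous by hypothesis. On $H \times H'$ I would define the operation $(y,y') \bullet (z,z') = (y \ast z,\, y' \circ z')$ and the inversion $(y,y') \mapsto (y^{-1}, (y')^{-1})$; these make $H \times H'$ a group in the purely algebraic sense. It therefore remains to show that the multiplication map $\mu : (H \times H') \times (H \times H') \to H \times H'$ and the inversion map $\nu : H \times H' \to H \times H'$ are digitally continuous when every product carries its minimal cartesian adjacency $\kappa_*$.

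The inversion map is immediate: by construction $\nu = \beta_H \times \beta_{H'}$, so it is a product of two digitally continuous maps and hence digitally continuous for the minimal product adjacencies (the same product-of-maps continuity already used implicitly in Proposition \ref{p3}). For the multiplication map I would first observe that $\mu$ is not literally a product of maps, because its domain groups the four factors as $((y,y'),(z,z'))$ whereas $\alpha_H \times \alpha_{H'}$ expects them grouped as $((y,z),(y',z'))$. To bridge this, introduce the coordinate-shuffle
\[\sigma : (H \times H') \times (H \times H') \longrightarrow (H \times H) \times (H' \times H'), \qquad ((y,y'),(z,z')) \longmapsto ((y,z),(y',z')).\]
Then directly from the definition of $\bullet$ one checks the factorisation $\mu = (\alpha_H \times \alpha_{H'}) \circ \sigma$. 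Since $\alpha_H$ and $\alpha_{H'}$ are digitally continuous, so is $\alpha_H \times \alpha_{H'}$, and as the composite of digitally continuous maps is again digitally continuous, the proof reduces to showing that $\sigma$ is a digital isomorphism for the minimal product adjacencies.

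The main obstacle is exactly this last point: verifying that $\sigma$ preserves the minimal cartesian adjacency in both directions. Recall that in the minimal product adjacency a pair of points is adjacent precisely when they agree in one factor and are adjacent in the other. I would therefore take two $\kappa_*$-adjacent points of the domain, split into the two cases according to which outer factor is held fixed, and split each case further according to which inner factor realises the adjacency; in every resulting subcase a short check shows the images are adjacent in $(H \times H) \times (H' \times H')$, and since $\sigma$ is a bijection obtained by permuting coordinate blocks the same argument applied to $\sigma^{-1}$ yields the reverse implication. Intuitively this works because the minimal adjacency is insensitive to the order in which one forms iterated products, so any permutation of the coordinate blocks is automatically a digital isomorphism. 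Once $\sigma$ is known to be an isomorphism, the factorisation above shows $\mu$ is digitally continuous, and together with the continuity of $\nu$ this proves that $(H \times H', \kappa_*, \bullet)$ is a $\kappa_*$-topological group.
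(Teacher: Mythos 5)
Your proof is correct and follows essentially the same route as the paper: both reduce the structure maps of $H \times H'$ to the products $\alpha_{H} \times \alpha_{H'}$ and $\beta_{H} \times \beta_{H'}$ of the given continuous structure maps, and verify digital continuity coordinatewise using the definition of the minimal cartesian product adjacency. The only difference is that you make explicit, via the shuffle map $\sigma$, the identification of $(H \times H') \times (H \times H')$ with $(H \times H) \times (H' \times H')$, which the paper performs silently by defining its multiplication map directly on $H \times H \times H' \times H'$ --- a welcome extra degree of care, since under the minimal adjacency a permutation of coordinate blocks is indeed a digital isomorphism, exactly as your case analysis indicates.
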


\begin{proof}
	Let $H$ be a $\kappa-$topological group. Then the digital maps
	\[\alpha_{1} : H \times H \to H \hspace*{1.0cm} \text{and} \hspace*{1.0cm} \beta_{1} : H \to H,\] 
	defined by $\alpha_{1}(y_{1},z_{1}) = y_{1} \ast z_{1}$ and $\beta_{1}(y_{1}) = y_{1}^{-1}$ for all $y_{1}$, $z_{1} \in H$, respectively, are digitally continuous. Similarly, for the $\lambda-$topological group $H,^{'}$ we have that the digital maps
	\[\alpha_{2} : H^{'} \times H^{'} \to H^{'} \hspace*{1.0cm} \text{and} \hspace*{1.0cm} \beta_{2} : H^{'} \to H,^{'}\] 
	defined by $\alpha_{2}(y_{2},z_{2}) = y_{2} \circ z_{2}$ and $\beta_{2}(y_{2}) = y_{2}^{-1}$, for all $y_{2}$, $z_{2} \in H,^{'}$ are digitally continuous. Define a digital map
	\[\alpha = \alpha_{1} \times \alpha_{2} : H \times H \times H^{'} \times H^{'} \to H \times H^{'} \hspace*{1.0cm}\]
	\hspace*{4.7cm} $((y_{1},z_{1}),(y_{2},z_{2})) \longmapsto (y_{1} \ast z_{1},y_{2} \circ z_{2})$.
	
	We shall show that $\alpha$ is a digitally continuous map. The product of digitally continuous maps is digitally continuous with a minimal adjacency relation. Let $((y_{1},z_{1}),(y_{2},z_{2}))$ and $((y_{1}{'},z_{1}{'}),(y_{2}{'},z_{2}{'}))$ be digitally connected points. Then $(y_{1},z_{1})$ is digitally connected with $(y_{1}{'},z_{1}{'})$ and $(y_{2},z_{2})$ is digitally connected with $(y_{2}{'},z_{2}{'})$. Since $\alpha_{1}$ is digitally continuous, $y_{1} \ast z_{1}$ is digitally connected with $y_{1}{'} \ast z_{1}{'}$. Similarly, for the digital continuity of $\alpha_{2}$, we have that $y_{2} \circ z_{2}$ is digitally connected with $y_{2}{'} \circ z_{2}{'}$. Cartesian product adjacency gives that $\alpha$ is digitally continuous. In order to satisfy the other condition, we define the digital map
	\[\beta = \beta_{1} \times \beta_{2} : H \times H^{'} \to H \times H^{'} \hspace*{1.0cm}\]
	\hspace*{5.4cm} $(y_{1},z_{1}) \longmapsto (y_{1}^{-1},z_{1}^{-1})$.
	
	Let $(y_{1},z_{1})$ and $(y_{2},z_{2})$ be digitally connected points for the cartesian product. Then we have that $y_{1}$ is digitally connected with $y_{2}$ and $z_{1}$ is digitally connected with $z_{2}$. Since $\beta_{1}$ and $\beta_{2}$ are digitally continous, we obtain that $y_{1}^{-1}$ is digitally connected with $y_{2}^{-1}$. Similary, for the digital continuity of $\beta_{2}$, we obtain that $z_{1}^{-1}$ is digitally connected with $z_{2}^{-1}$. Using the definition of the adjacency for the cartesian product, we conclude that $\beta$ is digitally continuous. This gives the required result. 
\end{proof}

\begin{definition}
	Let $(H,\kappa,\ast)$ and $(H{'},\lambda,\circ)$ be a $\kappa-$topological group and a $\lambda-$topological group, respectively. Then a digital map $\gamma : (H,\kappa,\ast) \rightarrow (H{'},\lambda,\circ)$ is a $(\kappa,\lambda)-$homomorphism between $\kappa-$topological group and $\lambda-$topological group if $\gamma$ is both digitally continuous and group homomorphism. A $(\kappa,\lambda)-$isomorphism between  $\kappa-$topological group and $\lambda-$topological group is both digital isomorphism and group homomorphism.
\end{definition}

\begin{example}
	It is easy to see that $(\mathbb{Z}^{2},4,+)$ is $4-$topological group by Proposition \ref{p1}. Consider the digital projection map
	\[\alpha : (\mathbb{Z}^{2},4,+) \longrightarrow (\mathbb{Z},2,+)\]
	\hspace*{5.0cm} $(m,n) \longmapsto m$.
	
	We prove that $\alpha$ is a $(4,2)-$homomorphism in the sense of topological groups but it is not a $(4,2)-$topological group isomorphism. $\alpha$ is a digitally continuous map because $m_{1}$ and $m_{2}$ are $2-$connected whenever $(m_{1},n_{1})$ and $(m_{2},n_{2})$ are $4-$adjacent points in $\mathbb{Z}^{2}$. Using the fact that the projection maps associated with a product of groups are always group isomorphisms, we have that $\alpha$ is a group homomorphism. Hence, we prove that $\alpha$ is a $(4,2)-$topological group homomorphism. On the other hand, the projection maps are not injective. Finally, we show that $\alpha$ is not a $(4,2)-$topological group isomorphism.
\end{example}

\quad Note that the digital isomorphism of two topological groups is stronger than simply requiring a digitally continuous group isomorphism. The inverse of the digital function must also be digitally continuous. The next example shows that two topological groups in digital images are not digitally isomorphic in the sense of topological groups whenever they are isomorphic as ordinary groups.

\begin{example}
	Consider the $2-$topological group $(G,2,\cdot)$ given in Example \ref{e1}. Let $(H,2,\ast)$ be another $2-$topological group for which $H = [8,9]_{\mathbb{Z}} \subset \mathbb{Z}$ and $\ast$ is the same group operation given in Example \ref{e2}. Then the digital map \linebreak $f : (G,2,\cdot) \rightarrow (H,2,\ast)$, defined by $f(1) = 8$ and $f(-1) = 9$, is an isomorphism of algebraic groups but not a $(2,2)-$isomorphism of topological groups. It is clear that $f$ is bijective. Further, $f$ preserves the group operation:
	\begin{eqnarray*}
		&&f(1 \cdot 1) = f(1) = 8 = 8 \ast 8 = f(1) \ast f(1) \\
		&&f(1 \cdot -1) = f(-1) = 9 = 8 \ast 9 = f(1) \ast f(-1) \\
		&&f(-1 \cdot 1) = f(-1) = 9 = 9 \ast 8 = f(-1) \ast f(1) \\
		&&f(-1 \cdot -1) = f(1) = 8 = 9 \ast 9 = f(-1) \ast f(-1). 
	\end{eqnarray*}
	There is no adjacent points in $G$. So, $f$ is digitally continuous. Contrarily, $8$ and $9$ are $2-$adjacent but $f^{-1}(8) = 1$ and $f^{-1}(9) = -1$ are not $2-$adjacent. Hence, the inverse of $f$ is not digitally continuous.
\end{example}

\begin{theorem}
	If $H$ is a subgroup of a $\kappa-$topological group $(G,\kappa,\ast)$, then $(H,\kappa,\ast)$ is a $\kappa-$topological group.
\end{theorem}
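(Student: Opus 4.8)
The plan is to realize $(H,\kappa,\ast)$ as a sub-object of $(G,\kappa,\ast)$ and to show that the structure maps of $H$ are nothing but the restrictions of those of $G$, so that digital continuity is inherited for free. First I would record the three pieces of data demanded by the definition of a $\kappa$-topological group. Since $H \subseteq G$, the image $H$ carries the adjacency relation $\kappa$ inherited from $G$; since $H$ is a subgroup, $(H,\ast)$ is a group; and the minimal cartesian-product adjacency on $H \times H$ is precisely the restriction of the minimal cartesian-product adjacency on $G \times G$, because both products are built from the same relation $\kappa$ and the four defining conditions for cartesian-product adjacency depend only on the $\kappa$-adjacencies of the coordinates. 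Consequently $H \times H$ is a sub-digital-image of $G \times G$.

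Next I would identify the structure maps explicitly. Writing $\alpha_{G}:G \times G \to G$ and $\beta_{G}:G \to G$ for the digitally continuous multiplication and inversion of $G$, the candidate maps for $H$ are
\[
\alpha_{H} = \alpha_{G}|_{H \times H}, \qquad \beta_{H} = \beta_{G}|_{H}.
\]
Here the subgroup hypothesis is essential: closure of $H$ under $\ast$ gives $\alpha_{G}(H \times H) \subseteq H$, and closure under inverses gives $\beta_{G}(H) \subseteq H$, so these restrictions genuinely land in $H$ and define digital maps $\alpha_{H}:H \times H \to H$ and $\beta_{H}:H \to H$.

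The remaining step is to verify that the restriction of a digitally continuous map to a sub-digital-image is again digitally continuous. Let $A \subseteq H \times H$ be $\kappa_{*}$-connected, where $\kappa_{*}$ denotes the minimal product adjacency. Since the adjacency on $H \times H$ is the restriction of that on $G \times G$, the set $A$ remains $\kappa_{*}$-connected when regarded inside $G \times G$; the digital continuity of $\alpha_{G}$ then makes $\alpha_{G}(A)$ a $\kappa$-connected subset of $G$. Because $\alpha_{G}(A) = \alpha_{H}(A) \subseteq H$, and connectivity of a point set depends only on the $\kappa$-adjacencies among its own elements, $\alpha_{H}(A)$ is $\kappa$-connected in $H$. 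Hence $\alpha_{H}$ is digitally continuous, and the identical argument applied to $\beta_{G}$ shows $\beta_{H}$ is digitally continuous. Collecting these facts against the definition yields that $(H,\kappa,\ast)$ is a $\kappa$-topological group.

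I expect the only genuinely delicate point to be the compatibility of adjacencies: one must check that the minimal cartesian-product adjacency of $H \times H$ coincides with the restriction of that of $G \times G$, and that connectivity of a subset is unaffected when passing between $H$ and $G$. Once this is secured, digital continuity of the structure maps of $H$ follows purely formally from that of $G$, and everything else is routine bookkeeping against the definition.
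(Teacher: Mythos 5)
Your proposal is correct and follows essentially the same route as the paper's proof: restrict the multiplication and inversion maps of $G$ to $H$, observe that $H$ (and $H \times H$) inherit their adjacency relations from $G$ (and $G \times G$), and conclude that digital continuity passes to the restrictions. You are in fact somewhat more careful than the paper, which leaves implicit both the closure of $H$ under $\ast$ and inversion and the compatibility of the minimal product adjacencies, and which argues via adjacency preservation rather than your equivalent connected-set formulation of digital continuity.
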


\begin{proof}
	Suppose that $(G,\kappa,\ast)$ is a topological group. Then 
	\[\alpha : G \times G \rightarrow G \hspace*{1.0cm} \text{and} \hspace*{1.0cm} \beta : G \rightarrow G \]
	\[\hspace*{1.1cm}(x,y) \longmapsto x \ast y \hspace*{2.7cm} x \longmapsto x^{-1}\]
	are digitally continuous. To show that the digital maps 
	\[\alpha_{1} : H \times H \rightarrow H \hspace*{1.0cm} \text{and} \hspace*{1.0cm} \beta_{1} : H \rightarrow H \]
	\[\hspace*{1.1cm}(a,b) \longmapsto a \ast b \hspace*{2.7cm} a \longmapsto a^{-1}\]
	are continuous, it is enough to demonstrate that $H \leq G$ and $\alpha$ and $\beta$ are digitally continuous. Indeed, for two adjacent points in $H \times H$, they are also adjacent in $G \times G$ and their images are adjacent in $G$. The adjacency relation in $H$ is the same for $G$. Therefore, their images are also adjacent in $H$. It shows that $\alpha_{1}$ is digitally continuous. Similarly, $\beta_{1}$ is digitally continuous. The continuity of $\alpha_{1}$ and $\beta_{1}$ gives the desired result. 
\end{proof} 
\section{Some Results For The Digital Higher Topological Complexity}
\label{sec:4}
\begin{theorem}\label{t1}
	Let $(H,\kappa,\cdot)$ be a $\kappa-$topological group such that $(H,\kappa)$ is digitally connected and $n > 1$. Then \[TC_{n}(H,\kappa) = cat_{\kappa_{\ast}}(H^{n-1}),\]
	where $\kappa_{\ast}$ is an adjacency relation for $H^{n-1}$.
\end{theorem}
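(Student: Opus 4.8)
The plan is to establish the two inequalities separately. The lower bound $cat_{\kappa_{*}}(H^{n-1}) \leq TC_{n}(H,\kappa)$ is already in hand: it is the left-hand inequality of Corollary \ref{p2} applied to the digitally connected image $(H,\kappa)$, and it uses nothing about the group structure. Hence the whole content of the theorem is the reverse inequality $TC_{n}(H,\kappa) \leq cat_{\kappa_{*}}(H^{n-1})$, which is where the hypothesis that $(H,\kappa,\cdot)$ is a $\kappa-$topological group must be exploited. Throughout I would work with the description $TC_{n}(H,\kappa) = genus_{\lambda_{*},\kappa_{*}}(e_{n})$, together with its reformulation $TC_{n}(H,\kappa) = genus_{\kappa,\kappa_{*}}(d_{n})$ from Proposition \ref{p4}, where $d_{n}(h) = (h,\ldots,h)$ and $e_{n}$ is the evaluation fibration serving as the fibrational substitute of $d_{n}$.

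The group structure enters through a digital shearing map $\Phi : H^{n} \to H^{n}$, $\Phi(h_{1},\ldots,h_{n}) = (h_{1}, h_{1}^{-1}h_{2}, \ldots, h_{1}^{-1}h_{n})$, with inverse $\Psi(h_{1},\ldots,h_{n}) = (h_{1}, h_{1}h_{2}, \ldots, h_{1}h_{n})$. Since multiplication $\alpha$ and inversion $\beta$ are digitally continuous and $H^{n}$ carries the minimal cartesian-product adjacency, both $\Phi$ and $\Psi$ are digitally continuous, so $\Phi$ is a digital isomorphism. A direct computation gives $\Phi \circ d_{n}(h) = (h,e,\ldots,e)$, so the shearing carries the diagonal onto $H \times \{e\}^{n-1}$; equivalently the projection $q : H^{n} \to H^{n-1}$, $q(h_{1},\ldots,h_{n}) = (h_{1}^{-1}h_{2}, \ldots, h_{1}^{-1}h_{n})$, is digitally continuous and sends the diagonal to the basepoint $(e,\ldots,e)$. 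This identifies $q$ as the correct map along which to pull back a categorical cover and supplies the geometry behind the section formula below.

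To finish I would construct local sections of $e_{n}$ explicitly. Let $cat_{\kappa_{*}}(H^{n-1}) = k$ and choose a cover $U_{1},\ldots,U_{k}$ of $H^{n-1}$ with each inclusion $U_{i} \hookrightarrow H^{n-1}$ nullhomotopic; using that $H^{n-1}$ is digitally connected I may assume each contracting homotopy $G^{(i)} : U_{i} \times [0,m]_{\mathbb{Z}} \to H^{n-1}$ ends at the basepoint $(e,\ldots,e)$. Put $W_{i} = q^{-1}(U_{i})$, which gives a cover of $H^{n}$ by $k$ sets. For $(h_{1},\ldots,h_{n}) \in W_{i}$ with $u = q(h_{1},\ldots,h_{n}) \in U_{i}$ and $G^{(i)}(u,t) = (G^{(i)}_{2}(u,t),\ldots,G^{(i)}_{n}(u,t))$, I define a wedge of paths $f = (f_{1},\ldots,f_{n})$ by $f_{1} \equiv h_{1}$ and $f_{j}(t) = h_{1}\cdot G^{(i)}_{j}(u,m-t)$ for $2 \leq j \leq n$. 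Then each $f_{j}$ has common initial value $f_{j}(0) = h_{1}$ and terminal value $f_{j}(m) = h_{1}(h_{1}^{-1}h_{j}) = h_{j}$, so $f \in H^{J_{n}}$ and $e_{n}(f) = (h_{1},\ldots,h_{n})$. This yields a digital section $s_{i} : W_{i} \to H^{J_{n}}$ of $e_{n}$ over $W_{i}$, whence $genus_{\lambda_{*},\kappa_{*}}(e_{n}) \leq k$, i.e. $TC_{n}(H,\kappa) \leq cat_{\kappa_{*}}(H^{n-1})$; combined with the lower bound this gives the theorem.

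The main obstacle I anticipate is the verification that each $s_{i}$ is genuinely $(\kappa_{*},\lambda_{*})-$continuous as a map into the function space $H^{J_{n}}$: one must track adjacencies through the left translation $h_{1}\cdot(-)$ and through $G^{(i)}$ under the minimal product adjacency, confirming that adjacent inputs in $W_{i}$ produce adjacent wedge-paths in the function-space adjacency of $H^{J_{n}}$. The algebra is purely formal; it is this adjacency bookkeeping that carries the real work. A secondary point is that, unlike the coarse bound $TC_{n} \leq cat_{\kappa_{*}}(H^{n})$, the sharp value cannot be read off from Proposition \ref{p5}, since the total space attached to the sheared diagonal is only digitally homotopy equivalent to $H$ rather than contractible; this is precisely why explicit sections are needed. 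Finally, I would reconcile the parameter length $m$ of the homotopies with the possibly unequal endpoints $m_{1},\ldots,m_{n}$ of the intervals forming $J_{n}$ by constant reparametrization, so that each $f_{j}$ is a legitimate digital path and $f$ a legitimate element of $H^{J_{n}}$.
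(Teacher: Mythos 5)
Your proposal is correct and takes essentially the same route as the paper: the paper also gets the lower bound from Corollary \ref{p2}, normalizes the contracting homotopies of a categorical cover $\{M_{i}\}$ of $H^{n-1}$ to end at $e_{H}^{(n-1)}$, covers $H^{n}$ by the sets $N_{i} = \{(h,hm_{1},\dots,hm_{n-1})\}$ --- exactly your $q^{-1}(U_{i})$ --- and builds sections of $e_{n}$ by left-translating the contraction paths, just as in your formula $f_{j}(t) = h_{1}\cdot G^{(i)}_{j}(u,m-t)$. Your shearing-map packaging, and your explicit attention to the adjacency bookkeeping for $s_{i}$ and to reparametrizing the homotopy length against the interval endpoints of $J_{n}$, are presentational refinements of the same argument (points the paper's proof passes over silently).
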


\begin{proof}
	By Proposition \ref{p2}, it is enough to show that $TC_{n}(H,\kappa) \leq r$ when $r$ equals $cat_{\kappa}(H^{n-1})$. Suppose that $\{M_{1}, M_{2}, ..., M_{r}\}$ is a covering of $H^{n-1}$, where all $M_{i}$'s are digitally contractible in $H^{n-1}$, for all $i = 1, ... ,r$. In other saying, $M_{i}$ contracts to an element $(h_{1},h_{2},...,h_{n-1})$ in $H^{n-1}$ for each $i$. Since $H$ is a $\kappa-$topological group, it has the identity element $e_{H}$. Let $(e_{H}, e_{H}, ...,e_{H})$ be denoted by $e_{H}^{(n-1)}$. Each contracting homotopy can be extended in $H^{n-1}$ such that \linebreak $(h_{1},h_{2},...,h_{n-1}) = e_{H}^{(n-1)}$ for all $i = 1,...,r$ because $H$ is $\kappa-$connected. Now, we define
	\begin{eqnarray*}
		N_{i} = \{(h,hm_{1},...,hm_{n-1}) : (m_{1},...,m_{n-1}) \in M_{i}, \hspace*{0.2cm} h \in H\}.
	\end{eqnarray*}
	We shall show that $e_{H}^{(n)}$ admits a digitally continuous section over each $N_{i}$. Let $m = (m_{1},...,m_{n-1})$. The digital contractibility of $M_{i}$ gives a digital path $\alpha_{m}$ and this path joins $e_{H}^{(n)}$ to each $m \in M_{i} \subset H^{n-1}$. We define a new digital path $\alpha_{m}{'}$ from $e_{H}^{(n)}$ to $(e_{H},m_{1},...,m_{n-1})$ in $N_{i}$. Then for any $h \in H$, $g\alpha_{m}{'}$ is a digital path in $H^{n}$ from $(h,h,...,h) = he_{H}^{(n)}$ to $(h,hm_{1},...,hm_{n-1})$. Finally, we define the digitally continuous map
	\begin{eqnarray*}
		s_{i} : N_{i} \rightarrow H^{J_{n}}
	\end{eqnarray*}
	as $s_{i}(h,hm_{1},...,hm_{n-1})$ is the $j-$th element of $h\alpha_{m}{'}$ on the $j-$th digital interval of $J_{n}$. Hence, we get $H^{n} = N_{1} \cup ... \cup N_{r}$. If we take $(c_{1},...,c_{n}) \in H^{n}$ and $h = c_{1}$, then $m_{i} = h^{-1}c_{i}$. So, there exists $j$ such that $(m_{1},...,m_{n}) \in M_{j}$. This means that $(c_{1},...,c_{n}) \in N_{j}$. As a result, $TC_{n}(H,\kappa) \leq r$.
\end{proof}

\begin{example}\label{e3}
	Consider the digital image $H$ given in Example \ref{e4}. $(H,4,\circ)$ is a $4-$topological group, where $\circ$ is a group operation:
	\begin{table}[h!]
		\centering
		\begin{tabular}{c|c|c|c|c|c|c|c|c}
			$\circ$ & $a$ & $b$ & $c$ & $d$ & $e$ & $f$ & $g$ & $h$ \\
			\hline
			a & h & a & b & c & d & e & f & g\\
			\hline
			b & a & b & c & d & e & f & g & h\\
			\hline
			c & b & c & d & e & f & g & h & a\\
			\hline
			d & c & d & e & f & g & h & a & b\\
			\hline
			e & d & e & f & g & h & a & b & c\\
			\hline
			f & e & f & g & h & a & b & c & d\\
			\hline
			g & f & g & h & a & b & c & d & e\\
			\hline
			h & g & h & a & b & c & d & e & f.
		\end{tabular}
		\vspace*{0.2cm}\caption{The group operation $\circ$ for $H$.}
		\label{tab:table4}
	\end{table}
	
	Note that $H$ is a cyclic group where $b$ is the identity, and $a$ is a generator. $H$ is not $4-$contractible digital image, so it is true that $TC_{n}(H,4) = 1$ only when $n=1$. To compute $TC_{2}(H,4)$, we use Theorem \ref{t1}. By Example \ref{e4}, we obtain $cat_{4}(H) = 2$. As a result, we get $TC_{2}(H,4) = 2$.
\end{example}

\begin{corollary} \label{c1}
	Let $(H,\kappa,\ast)$ be a $\kappa-$topological group with $(H,\kappa)$ is digitally connected. Then for $n>2$,
	\[TC_{n}(H,\kappa) - TC_{n-1}(H,\kappa) \leq cat_{\kappa}(H).\]
\end{corollary}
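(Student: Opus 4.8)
The plan is to turn the statement into a product (subadditivity) estimate for the digital Lusternik--Schnirelmann category and then to settle that estimate. Since the hypothesis $n>2$ forces both $n>1$ and $n-1>1$, Theorem \ref{t1} applies simultaneously to $TC_n$ and to $TC_{n-1}$, giving
\[
TC_n(H,\kappa)=cat_{\kappa_{*}}(H^{n-1}),\qquad TC_{n-1}(H,\kappa)=cat_{\lambda_{*}}(H^{n-2}),
\]
where $\kappa_{*}$ and $\lambda_{*}$ are the minimal product adjacencies on $H^{n-1}$ and $H^{n-2}$. Thus the corollary is equivalent to
\[
cat_{\kappa_{*}}(H^{n-1})\le cat_{\lambda_{*}}(H^{n-2})+cat_{\kappa}(H),
\]
and since $H^{n-1}=H^{n-2}\times H$ this is exactly the subadditivity of $cat$ under cartesian products applied to the factors $H^{n-2}$ and $H$.

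My preferred way to obtain this subadditivity is to reuse the paper's own genus machinery. Over each factor I would take a digital fibration with digitally contractible total space realizing the category, namely a based digital path fibration $q_1\colon E_1\to H^{n-2}$ and $q_2\colon E_2\to H$ given by evaluation at the terminal endpoint. By Proposition \ref{p5}, since $E_1$ and $E_2$ are contractible, $genus(q_1)=cat_{\lambda_{*}}(H^{n-2})$ and $genus(q_2)=cat_{\kappa}(H)$. The product fibration $q_1\times q_2\colon E_1\times E_2\to H^{n-2}\times H=H^{n-1}$ again has digitally contractible total space, so a second application of Proposition \ref{p5} gives $genus(q_1\times q_2)=cat_{\kappa_{*}}(H^{n-1})$, while Proposition \ref{p3} yields
\[
genus(q_1\times q_2)\le genus(q_1)+genus(q_2)=cat_{\lambda_{*}}(H^{n-2})+cat_{\kappa}(H).
\]
Combining these three facts gives the required inequality, and feeding it back through Theorem \ref{t1} completes the proof.

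The main obstacle is the one hidden step above: producing, over a digitally connected image, a genuine digital fibration whose total space is digitally contractible and whose genus equals the category. This amounts to verifying that the based digital path space is digitally contractible (a delicate point, since digital paths carry an integer length parameter and the contracting homotopy must reparametrize them within a fixed interval $[0,m]_{\mathbb{Z}}$) and that the terminal evaluation map has the digital homotopy lifting property. Should this route prove awkward, the fallback is a direct construction of a categorical cover of the product: starting from categorical covers $\{A_i\}_{i=1}^{p}$ of $H^{n-2}$ and $\{B_j\}_{j=1}^{q}$ of $H$, each cell $A_i\times B_j$ is nullhomotopic in the product, and grouping the cells along the anti-diagonals $\{(i,j):i+j=k\}$ for $k=2,\dots,p+q$ collapses the $pq$ cells to $p+q-1$ categorical sets. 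Here the delicate point becomes arranging that the cells sharing an anti-diagonal are pairwise non-adjacent and synchronizing their finitely many contraction times into one interval, so that their digital contracting homotopies glue into a single nullhomotopy. Either way, the crux is a digital incarnation of the classical product inequality $cat(A\times B)\le cat(A)+cat(B)-1$.
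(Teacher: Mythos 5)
Your reduction takes a genuinely different route from the paper's, and that route contains a genuine gap. You invoke Theorem \ref{t1} twice, converting both $TC_{n}$ and $TC_{n-1}$ into categories of powers of $H$, so the entire corollary comes to rest on the digital product inequality $cat(X\times Y)\le cat(X)+cat(Y)$ --- and that inequality is proved nowhere in the paper and is not among the quoted results, while neither of your two mechanisms for it closes the gap. For the path-fibration route, Proposition \ref{p5} can only be applied after you have actually produced a digital fibration with digitally contractible total space over each factor; the paper never establishes that terminal evaluation on a based digital path space has the digital homotopy lifting property (even the definition of $TC$ simply \emph{posits} that $p:(X^{[0,m]_{\mathbb{Z}}},\lambda_{\ast})\to(X\times X,\kappa_{\ast})$ is a fibration), never establishes contractibility of such a path space under the function-space adjacency of \cite{LupOpreaScov:2019} with its fixed length parameter $m$, and never shows that a product of digital fibrations is a digital fibration (needed for your third application of Proposition \ref{p5}). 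You correctly flag these as delicate, but flagging them does not discharge them, and none of them can be cited.

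The fallback is worse off, for a concrete digital reason: digital continuity is not a local condition. A homotopy defined piecewise on a union $\bigcup_{i+j=k}A_{i}\times B_{j}$ that is digitally continuous on each cell need not be continuous on the union, and you cannot in general arrange the cells on one anti-diagonal to be pairwise non-adjacent --- the covers $\{A_{i}\}$ and $\{B_{j}\}$ need not be disjoint, so two such cells may share or contain adjacent points, and the classical openness/normality argument that separates and glues the contractions has no digital analogue. Note that the paper's own proof sidesteps the product inequality for $cat$ entirely: it converts $TC_{n-1}$ and $TC_{2}$ into Schwarz genera of the diagonal maps $d_{n-1}$ and $d_{2}$ via Proposition \ref{p4}, uses Theorem \ref{t1} only once (to identify $TC_{2}(H,\kappa)=cat_{\kappa}(H)$), and applies the product estimate at the level of genus, Proposition \ref{p3}, to $d_{n-1}\times d_{2}:H^{2}\to H^{n+1}$. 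If you want to salvage your outline, the cleanest repair is the same move: phrase the inequality you need as one about genera of diagonal maps, where Proposition \ref{p3} (with fibrational substitutes) already does the work, rather than as an unproven product inequality for digital Lusternik--Schnirelmann category.
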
 

\begin{proof}
	By Proposition \ref{p4}, we have $TC_{n-1}(H,\kappa) = genus_{\kappa,\kappa_{*}}(d_{n-1})$, where \[d_{n-1} : (H,\kappa) \rightarrow (H^{n-1},\kappa_{1})\] is a diagonal map of digital images with the adjacency relation $\kappa_{1}$ for $H^{n-1}$. Furthermore, Theorem \ref{t1} allows us that $cat_{\kappa}(H) = TC_{2}(H,\kappa)$. Therefore, we get $TC_{2}(H,\kappa) = genus_{\kappa,\kappa_{2}}(d_{2})$, where $d_{2}$ is also a diagonal map with the adjacency relation $\kappa_{2}$ for $H^{2}$. Proposition \ref{p3} admits that \[genus_{\kappa_{2},\kappa_{3}}(d_{n-1} \times d_{2}) \leq genus_{\kappa,\kappa_{1}}(d_{n-1}) + genus_{\kappa,\kappa_{2}}(d_{2})\] with the adjacency relation $\kappa_{3}$ for $H^{n+1}$. Considering that the cartesian product of diagonal maps is $d_{n-1} \times d_{2} : (H^{2},\kappa_{2}) \rightarrow (H^{n+1},\kappa_{3})$, we conclude that
	\[TC_{n}(H,\kappa) - TC_{n-1}(H,\kappa) \leq cat_{\kappa}(H).\] 
\end{proof}

\begin{example}
	From Example \ref{e3}, we have $TC_{2}(H,4) = cat_{4}(H) = 2$. Corollary \ref{c1} gives an idea for the upper bound of $TC_{3}(H,4)$ without having to work in $\mathbb{Z}^{4}$. It indicates that $TC_{3}(H,4) - TC_{2}(H,4) \leq cat_{4}(H)$ and hence $TC_{3}(H,4) \leq 4$.
\end{example}
\section{Conclusion}\label{sec6}
\label{sec:5}
\quad We first considered a relation between the Lusternik-Schnirelmann theory and the higher topological complexity more conceretely in digital images. Second, our task is to include  $\kappa-$topological groups in our study. While doing theoretical modeling, we also observe examples of digital images that might be useful in later works. We try to get the properties in terms of the digital higher topological complexity. Some theoretical infrastructure needs to be established before accessing the applications of motion planning algorithms in digital images. So, these results are valuable in our opinion. We wish to progress to the wide application area of motion planning algorithms by proceeding step by step. We intend to make an impact on at least one application area for the future works. For example, in computer games, virtual characters have to use motion planning algorithms to determine their direction and find a way between two locations in the virtual environment. In addition to this, we can encounter motion planning problem in almost every aspect of our life such as military simulations, probability and economics, artificial intelligence, urban design, robot-assisted surgery and the study of biomolecules.

\acknowledgment{This work was partially supported by Research Fund of the Ege University (Project Number: FDK-2020-21123). In addition, the first author is granted as fellowship by the Scientific and Technological Research Council of Turkey TUBITAK-2211-A.}

\end{document}